\newtheorem{theorem}{Theorem}[section]
\newtheorem{lemma}[theorem]{Lemma}
\newtheorem{proposition}[theorem]{Proposition}
\newtheorem{corollary}[theorem]{Corollary}
\newtheorem{remark}[theorem]{Remark}
\newcommand{\s}{\subseteq}
\begin{document}
\title{Dot-product graphs in finite fields}

\author{Chengfei Xie\thanks{Corresponding author. C. Xie is with the Institute of Mathematics and Interdisciplinary Sciences, Xidian University, Xi’an 710126, China (e-mail:cfxie@cnu.edu.cn). The research of C. Xie is supported by the National Natural Science Foundation of China under Grant No. 12401440.}
~and Gennian Ge\thanks{G. Ge is with the School of Mathematical Sciences, Capital Normal University, Beijing 100048, China (e-mail: gnge@zju.edu.cn). The research of G. Ge is supported by the National Key Research and Development Program of China under Grant 2020YFA0712100, the National
Natural Science Foundation of China under Grant 12231014, and Beijing Scholars Program.}}

\maketitle

\begin{abstract}
In this paper, we study the dot-product graphs in $\mathbb{F}_q^d$. We prove that if the size of the product of two adjacent sets is large enough, then  the set of dot-product graphs has positive density. Our method is based on finite field Fourier analytic techniques.
\smallskip
\end{abstract}
\medskip

\noindent {{\it Keywords\/}: dot product, graph, finite field, Fourier transform}

\smallskip

\noindent {{\it AMS subject classifications\/}: 11T30, 11T23, 52C10}

\section{Introduction}
The Erd\H{o}s distinct distance problem asks the minimum number of distinct distances determined by $n$ points in the plane. Erd\H{o}s conjectured that this number is at least $cn/\sqrt{\log n}$ for some constant $c$. In a significant breakthrough, Guth and Katz \cite{MR3272924} proved that the number of distinct distances is at least $cn/\log n$. The continuous analog of this problem is the Falconer's conjecture, which states that if the Hausdorff dimension of a set $A\subseteq\mathbb{R}^d$ is strictly larger than $d/2$, then the set of Euclidean distances between pairs of points in $A$ has positive Lebesgue measure. We refer the readers to \cite{MR4201782}, \cite{MR4297185} and \cite{MR4055179} for the best known results.

Let $\mathbb{F}_q$ be a field with $q$ elements. We write $X\lesssim Y$ if there exists a constant $C$ such that $X\leq CY$, and write $X\sim Y$ if $X\lesssim Y$ and $Y\lesssim X$. In the setting of finite fields, the Erd\H{o}s-Falconer distance problem asks the minimum size of $A\subseteq\mathbb{F}_q^d$ to ensure that the distance set
$$
\Delta(A):=\{\|x-y\|:x, y\in A\}
$$
contains a positive proportion of $\mathbb{F}_q$, where $\|x\|:=x_1^2+x_2^2+\cdots+x_d^2$ for $x=(x_1, x_2, \ldots, x_d)\in\mathbb{F}_q^d$. In \cite{MR2336319}, Iosevich and Rudnev showed that if $|A|\geq Cq^{(d+1)/2}$ for some constant $C$, then $\Delta(A)=\mathbb{F}_q$. They also proved that $|A|\geq Cq^{d/2}$ is a necessary condition in general.

Similar to the distance, we define the dot-product set
$$
\Pi(A):=\{x\cdot y:x, y\in A\}
$$
for $A\subseteq\mathbb{F}_q^d$. It is natural to ask the minimum size of $A\subseteq\mathbb{F}_q^d$ to ensure that $\Pi(A)$ contains a positive proportion of $\mathbb{F}_q$.
Hart et al. \cite{MR2775806} proved that if $A\subseteq\mathbb{F}_q^d$ with $|A|>q^{\frac{d+1}{2}}$, then $\Pi(A)\supseteq\mathbb{F}_q^*$. They also showed that the exponent $(d+1)/2$ cannot be improved in general. In \cite{MR3754899}, Covert and Senger considered pairs of dot products.  For $A\subseteq\mathbb{F}_q^d$, define
$$
\Pi(A\times A\times A):=\{(x\cdot y, y\cdot z):x, y, z\in E\}.
$$
Covert and Senger showed that if $|A|\gtrsim q^{\frac{d+1}{2}}$, then $\Pi(A\times A\times A)\supseteq\left(\mathbb{F}_q^*\right)^2$. Blevins et al. \cite{MR4539819} proved a more general result, which is called dot product chains. Let
$$
\Pi\left(A^{k+1}\right):=\{(x_1\cdot x_2, x_2\cdot x_3, \ldots, x_k\cdot x_{k+1}):x_i\in A, 1\leq i\leq k+1\}.
$$
If $|A|\gtrsim q^{\frac{d+k-1}{2}}$, then $\Pi\left(A^{k+1}\right)\supseteq\left(\mathbb{F}_q^*\right)^{k}$. On the other hand, Chapman et al. \cite{MR2917133} considered the $k$-simplices determined by dot products in $A$ and proved that if $A\subseteq\mathbb{F}_q^d$ and $|A|\gtrsim q^{\frac{d+k}{2}}$, then $A$ determines a positive proportion of all $k$-simplices.

We consider more general objects called dot-product graphs. Let $G=G(V, E)$ be a connected simple graph, where the vertex set $V=\{v_1, v_2, \ldots, v_k\}$ and the edge set $E\subseteq {V\choose 2}$. For every $\{v_i, v_j\}\in E$, we assign a value $\alpha({\{v_i, v_j\}})$ in $\mathbb{F}_q$, i.e. a function $\alpha: E\rightarrow\mathbb{F}_q$. We also view $\alpha$ as a row vector in $\left(\mathbb{F}_q\right)^E$, whose columns are indexed by elements in $E$, i.e. its $\{v_i, v_j\}$'s column is $\alpha({\{v_i, v_j\}})$ for $\{v_i, v_j\}\in E$. For $S\subseteq\left(\mathbb{F}_q^d\right)^k$, we define a set $\Pi_G(S)$, where a function $\alpha: E\rightarrow\mathbb{F}_q$ belongs to $\Pi_G(S)$ if and only if there exist $(x_1, x_2, \ldots, x_k)\in S$, such that for every $\{v_i, v_j\}\in E$, we have $x_i\cdot x_j=\alpha(\{v_i,v_j\})$. Using these notations, some of above results can be formulated as follows.
\begin{theorem}[\cite{MR2917133}]\label{simplexjiu}
Let $G$ be a complete graph with $k+1$ vertices. If $A\subseteq\mathbb{F}_q^d$ with $|A|\gtrsim q^{\frac{d+k}{2}}$, then $\left|\Pi_G\left(A^{k+1}\right)\right|\gtrsim q^{k+1\choose 2}$.
\end{theorem}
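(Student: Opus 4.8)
The plan is to run the $L^2$ (Cauchy--Schwarz) method. For $\mathbf t=(t_{ij})_{1\le i<j\le k+1}\in\mathbb{F}_q^{\binom{k+1}{2}}$ let $\nu(\mathbf t)$ be the number of tuples $(x_1,\dots,x_{k+1})\in A^{k+1}$ with $x_i\cdot x_j=t_{ij}$ for all $i<j$, so that $\sum_{\mathbf t}\nu(\mathbf t)=|A|^{k+1}$ and $\Pi_G(A^{k+1})=\{\mathbf t:\nu(\mathbf t)>0\}$. By Cauchy--Schwarz, $|A|^{2(k+1)}\le\big|\Pi_G(A^{k+1})\big|\cdot\sum_{\mathbf t}\nu(\mathbf t)^2$, so it suffices to prove the square-function estimate $\sum_{\mathbf t}\nu(\mathbf t)^2\lesssim|A|^{2(k+1)}q^{-\binom{k+1}{2}}$.

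Next I would expand. Writing $\chi$ for a nontrivial additive character of $\mathbb{F}_q$ and using orthogonality on each edge constraint,
\[
\sum_{\mathbf t}\nu(\mathbf t)^2=\#\big\{(\mathbf x,\mathbf y)\in A^{k+1}\times A^{k+1}:\ x_i\cdot x_j=y_i\cdot y_j\ \forall\, i<j\big\}=\frac{1}{q^{\binom{k+1}{2}}}\sum_{\mathbf s\in\mathbb{F}_q^{\binom{k+1}{2}}}|S(\mathbf s)|^2,
\]
where $S(\mathbf s)=\sum_{(x_1,\dots,x_{k+1})\in A^{k+1}}\chi\!\big(\sum_{i<j}s_{ij}\,x_i\cdot x_j\big)$. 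The term $\mathbf s=0$ contributes exactly $q^{-\binom{k+1}{2}}|A|^{2(k+1)}$, which is the desired main term, so the whole problem reduces to the off-diagonal bound $\sum_{\mathbf s\ne 0}|S(\mathbf s)|^2\lesssim|A|^{2(k+1)}$.

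To estimate $S(\mathbf s)$, I would apply Fourier inversion on $\mathbb{F}_q^d$ to each indicator $1_A(x_i)$ (in the normalization $\widehat f(\xi)=q^{-d}\sum_x f(x)\chi(-x\cdot\xi)$). Since $\sum_{i<j}s_{ij}\,x_i\cdot x_j=\sum_{\ell=1}^d \mathbf z_\ell^{\top}B\,\mathbf z_\ell$, where $B$ is the symmetric $(k+1)\times(k+1)$ matrix with zero diagonal and off-diagonal entries $s_{ij}/2$ and $\mathbf z_\ell$ collects the $\ell$-th coordinates, the sum over all of $(\mathbb{F}_q^d)^{k+1}$ factors as a product over $\ell$ of classical quadratic Gauss sums attached to $B$. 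Evaluating these turns $S(\mathbf s)$ into $\prod_i\widehat{1_A}(\xi_i)$ summed against a Gaussian phase; after stratifying by $r=\operatorname{rank}(B)$, the Gauss sum has modulus $q^{d(k+1)-dr/2}$ on an affine locus of frequencies of codimension $d(k+1-r)$ and vanishes off it, carrying a quadratic phase in the surviving $\xi_i$ built from an inverse of $B$ on its row space. One then bounds each rank stratum using $\|\widehat{1_A}\|_1\le q^{d/2}\|\widehat{1_A}\|_2=|A|^{1/2}$ together with Plancherel, counts how many $\mathbf s$ have a given rank, and sums; the hypothesis $|A|\gtrsim q^{(d+k)/2}$ is precisely what makes the full-rank stratum — the dominant one — fit under $|A|^{2(k+1)}$.

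The main obstacle I expect is exactly that full-rank stratum: the crude bound that discards the oscillation of the Gaussian phase loses about a factor $q^{d/2}$ against the target, so one must retain the cancellation coming from the (generically nonzero) diagonal of the inverse of $B$, equivalently the extra cancellation in the Gauss-type sum $\sum_{\xi\in\mathbb{F}_q^d}\widehat{1_A}(\xi)\,\chi(c\|\xi\|)$ supplied by the Fourier transform of the sphere. Securing this refinement, handling the lower-rank strata (where the frequency locus degenerates but there are correspondingly fewer matrices $\mathbf s$), and dealing with characteristic $2$ (where $B$ must be replaced by the associated alternating form) is where the real bookkeeping lies; an alternative that may be cleaner is to induct on $k$, peeling off the last vertex $x_{k+1}$ to expose a single linear functional $x_{k+1}\cdot(\,\cdot\,)$ and one factor of $\widehat{1_A}$, and feeding in the $k$-vertex estimate.
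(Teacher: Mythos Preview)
This theorem is cited from~\cite{MR2917133} and not proved directly here; the paper's own argument is the proof of the more general Theorem~\ref{tuxin}, which specializes to the present statement by taking $G=K_{k+1}$ and all $A_i=A$. That proof follows precisely the ``alternative that may be cleaner'' you mention in your last sentence: it inducts on the number of vertices, peeling off one vertex at a time. The engine is Lemma~\ref{yinli}, an $L^2$ estimate for the count $\bigl|\{y\in A:\ x_i\cdot y=t_i,\ 1\le i\le\ell\}\bigr|$ averaged over $(x_1,\dots,x_\ell)\in A_1\times\cdots\times A_\ell$ and over $(t_1,\dots,t_\ell)$; because the constraints on the single peeled vertex $y$ are \emph{linear}, the Fourier side collapses to a single value $\widehat A(s_1x_1+\cdots+s_\ell x_\ell)$ (equation~(\ref{fuliye})), and no quadratic Gauss sum or rank stratification ever appears. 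A Cauchy--Schwarz step (Theorem~\ref{dingli} and Corollary~\ref{tuilun}) then converts Lemma~\ref{yinli} into the inductive step. Your primary route---expanding all $\binom{k+1}{2}$ edge constraints simultaneously and analyzing the resulting zero-diagonal symmetric matrix $B$ via Gauss sums and rank strata---is genuinely different and closer in spirit to the original~\cite{MR2917133}; you have correctly located its main obstacle (the crude bound on the full-rank stratum overshoots unless one exploits the generically nonzero diagonal of $B^{-1}$, with attendant degenerate loci and characteristic-$2$ headaches). The paper's inductive peel simply sidesteps that obstacle: since each step only ever sees linear constraints on the new vertex, the problematic quadratic phase never forms and no sphere-type decay is needed, at the cost of running an induction rather than a single self-contained estimate.
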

\begin{theorem}[\cite{MR4539819}]\label{chainjiu}
Let $G$ be a path with $k+1$ vertices. If $A\subseteq\mathbb{F}_q^d$ with $|A|\gtrsim q^{\frac{d+k-1}{2}}$, then $\Pi_G\left(A^{k+1}\right)\supseteq\left(\mathbb{F}_q^*\right)^{k}$.
\end{theorem}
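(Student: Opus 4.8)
Write the path $G$ as $v_1v_2\cdots v_{k+1}$ with edges $\{v_i,v_{i+1}\}$ for $1\le i\le k$, and fix a target $\mathbf{t}=(t_1,\dots,t_k)\in(\mathbb{F}_q^*)^k$. The plan is to show that the number of realizing chains,
\[
\Lambda(\mathbf{t}):=\#\bigl\{(x_1,\dots,x_{k+1})\in A^{k+1}:\ x_i\cdot x_{i+1}=t_i\ \text{for all}\ 1\le i\le k\bigr\},
\]
is positive; in fact I would establish $\Lambda(\mathbf{t})=(1+o(1))|A|^{k+1}q^{-k}$ once $|A|\gtrsim q^{(d+k-1)/2}$ with a sufficiently large implied constant, which gives $\mathbf{t}\in\Pi_G(A^{k+1})$ at once. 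Fix a nontrivial additive character $\chi$ of $\mathbb{F}_q$ and use $\mathbf{1}(z=0)=q^{-1}\sum_{s\in\mathbb{F}_q}\chi(sz)$ to expand each edge constraint. Since a vertex $x_1\in A$ on a chain satisfies $x_1\cdot x_2=t_1\neq0$, it cannot equal $0$, so we may assume $0\notin A$.

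It is cleanest to argue by induction on the length $k$, peeling the path one vertex at a time. Introduce the ``dot product $t$'' operator $N_t$ on $\mathbb{C}^{\mathbb{F}_q^d}$ by $(N_tf)(x)=\sum_{y:\,x\cdot y=t}f(y)$ and the coordinate projection $D_A=\mathrm{diag}(1_A)$; viewing $1_A$ as a vector one gets $\Lambda(\mathbf{t})=\langle 1_A,\,N_{t_1}D_AN_{t_2}D_A\cdots D_AN_{t_k}\,1_A\rangle$. The character expansion decomposes $N_t=q^{-1}J+E_t$, where $J$ is the all-ones matrix and $E_t=q^{-1}\sum_{s\neq0}\chi(-st)F_s$ with $F_s=\bigl(\chi(s\,x\cdot y)\bigr)_{x,y}$; a Gauss sum computation gives $F_s^{*}F_s=q^dI$ for $s\neq0$, hence the key bound $\|E_t\|_{\mathrm{op}}\le q^{d/2}$ --- the same estimate that underlies the single dot product theorem of Hart et al.~\cite{MR2775806}. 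Substituting $N_{t_i}=q^{-1}J+E_{t_i}$ and expanding, the all-$J$ term telescopes, using $J1_A=|A|\mathbf{1}$ and $D_A\mathbf{1}=1_A$, to the main term $|A|^{k+1}q^{-k}$.

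The real work is to show that the remaining $2^k-1$ terms --- products of factors $q^{-1}J$ and $E_{t_i}$ interleaved with the $D_A$'s --- are each $o(|A|^{k+1}q^{-k})$. I would carry this through the induction by controlling, for the partial counting functions $\nu_j(y)=1_A(y)\cdot\#\{(x_1,\dots,x_{j-1})\in A^{j-1}:\ \text{chain with targets}\ t_1,\dots,t_{j-1}\ \text{ending at}\ y\}$, both $\|\nu_j\|_1$ and $\|\nu_j\|_2$. Since $\nu_{j+1}=q^{-1}\|\nu_j\|_1\,1_A+D_AE_{t_j}\nu_j$, Cauchy--Schwarz together with $\|E_{t_j}\|_{\mathrm{op}}\le q^{d/2}$ and the elementary bound $\|E_{t_j}1_A\|_2^2\lesssim|A|q^{d-1}$ (here one uses $t_j\neq0$) gives a coupled recursion for $(\|\nu_j\|_1,\|\nu_j\|_2)$ whose ``error versus main term'' ratio contracts exactly when $|A|\gtrsim q^{(d+k-1)/2}$; the extra power of $q$ over the single dot product threshold is what absorbs the length-$k$ accumulation of errors with an absolute constant, yielding $\|\nu_{k+1}\|_1=\Lambda(\mathbf{t})\sim|A|^{k+1}q^{-k}$.

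I expect the main obstacle to be the short chains. For $k=2$ the operator-norm bound $\|E_t\|_{\mathrm{op}}\le q^{d/2}$ is too crude to reach the stated exponent $q^{(d+1)/2}$: one must use that the $t_i$ are nonzero and control $N_t$ on its restriction to $A$ (via Hilbert--Schmidt norms, equivalently the near-equidistribution of $x\cdot y$ over $A\times A$) rather than on all of $\mathbb{F}_q^d$. This is essentially the analysis of Covert and Senger~\cite{MR3754899}, which I would take as the base case $k=2$ of the induction; the inductive step above only lengthens the chain, and the increasing hypothesis $q^{(d+k-1)/2}$ is precisely what keeps the error recursion under control. A more delicate alternative, avoiding the base-case input, would be to run the refined (restriction-type) estimate at every step of the path, but the induction with the Covert--Senger base case is the cleanest route.
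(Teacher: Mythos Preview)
Your approach is correct and will prove the theorem; it differs from the paper's route mainly in how the interior of the chain is handled. The paper does not prove this statement directly but deduces it from the more general tree result (Theorem~\ref{shuxin}): there the count is written as $R_0+R_1+\cdots+R_k$ according to how many character frequencies $s_{\{i,j\}}$ are nonzero, and each $R_m$ with $m\ge2$ is bounded by a bilinear Schur-type estimate (Lemma~\ref{quadratic}). Two leaf edges supply the second-moment input $\sum_x|S_{A,\alpha}(x)|^2\le|A|q^{d+1}$ --- the same estimate as your $\|E_t1_A\|_2^2\lesssim|A|q^{d-1}$ --- while the $m-2$ intermediate character sums are bounded \emph{trivially} by $q$ each, giving $|R_m|\lesssim q^{d+m-1-k}|A|^{k-1}$, whose worst case $m=k$ matches the threshold $|A|^2\gtrsim q^{d+k-1}$. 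Your spectral scheme instead controls the middle via the operator norm of $E_t$ and a coupled $(\ell^1,\ell^2)$ recursion on the partial counts $\nu_j$. One remark: your bound $\|E_t\|_{\mathrm{op}}\le q^{d/2}$, obtained from $\|F_s\|_{\mathrm{op}}=q^{d/2}$ and the triangle inequality, is not sharp; for $t\neq0$ one has $\|E_t\|_{\mathrm{op}}=q^{(d-1)/2}$. Even with the crude $q^{d/2}$, the $\ell^2$-recursion contracts as soon as $|A|\gtrsim q^{(d+2)/2}$, so the hypothesis $|A|\gtrsim q^{(d+k-1)/2}$ is comfortably sufficient for $k\ge3$, and $k=2$ is handled directly by the two-endpoint Cauchy--Schwarz (Covert--Senger) as you say. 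In short, the paper's Schur argument is coarser term by term but extends verbatim to arbitrary trees; your operator-norm recursion is sharper on paths and is essentially an expander-mixing computation.
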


In this paper, we generalize Theorems \ref{chainjiu} and \ref{simplexjiu}, and we have the following theorems.
\begin{theorem}\label{tuxin}
For integers $d$ and $k$ with $2\leq k\leq d+1$, there exists $C=C(k)$ such that the following holds. Let $G=G(V, E)$ be a connected simple graph, where $V=\{v_1, v_2, \ldots, v_k\}$ and $E\subseteq{V\choose 2}$. If $A_1, A_2, \ldots, A_{k}\subseteq\mathbb{F}_q^d$ satisfy $|A_i||A_{j}|\geq Cq^{d+k-1}$ for every $\{v_i, v_j\}\in E$ and $S\subseteq\prod_{i=1}^{k}A_i$ with $|S|\sim\prod_{i=1}^{k}|A_i|$, then
$$
\left|\Pi_{G}(S)\right|\gtrsim q^{|E|}.
$$
In other words, $\Pi_{G}(S)$ contains a positive proportion of elements of $\left(\mathbb{F}_q\right)^E$.

In particular,
$$
\left|\Pi_{G}(A_1\times A_2\times\cdots\times A_k)\right|\gtrsim q^{|E|}.
$$
\end{theorem}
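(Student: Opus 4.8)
The plan is to run an $L^2$/Cauchy--Schwarz argument and reduce everything to a single Fourier estimate on the full product set. First, it suffices to treat $S=\prod_{i=1}^kA_i$. For $\alpha\in(\mathbb F_q)^E$ put $\nu_S(\alpha)=|\{(x_1,\dots,x_k)\in S: x_i\cdot x_j=\alpha(\{v_i,v_j\})\text{ for all }\{v_i,v_j\}\in E\}|$, so that $\Pi_G(S)$ is the support of $\nu_S$ and $\sum_\alpha\nu_S(\alpha)=|S|$. Since $S\subseteq\prod_iA_i$ we have $\nu_S\le\nu_{\prod_iA_i}$ pointwise, and Cauchy--Schwarz gives $|\Pi_G(S)|\ge\big(\sum_\alpha\nu_S(\alpha)\big)^2\big/\sum_\alpha\nu_S(\alpha)^2=|S|^2\big/\sum_\alpha\nu_S(\alpha)^2\ge \big(c\prod_i|A_i|\big)^2\big/\sum_\alpha\nu_{\prod_iA_i}(\alpha)^2$, using $|S|\sim\prod_i|A_i|$. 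Hence it is enough to prove $\sum_\alpha\nu_{\prod_iA_i}(\alpha)^2\lesssim q^{-|E|}\prod_i|A_i|^2$ with implied constant depending only on $k$; the ``in particular'' statement is literally the case $S=\prod_iA_i$.

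Next, fix a nontrivial additive character $\chi$ of $\mathbb F_q$ and expand each equality via $\mathbf 1[a=b]=q^{-1}\sum_{s\in\mathbb F_q}\chi(s(a-b))$. Writing $N:=\sum_\alpha\nu_{\prod_iA_i}(\alpha)^2=|\{(\vec x,\vec y)\in(\prod_iA_i)^2: x_i\cdot x_j=y_i\cdot y_j\ \forall\{v_i,v_j\}\in E\}|$, this yields $N=q^{-|E|}\sum_{t\in(\mathbb F_q)^E}|T(t)|^2$ where $T(t)=\sum_{\vec x\in\prod_iA_i}\chi\big(\sum_{\{v_i,v_j\}\in E}t(\{v_i,v_j\})\,x_i\cdot x_j\big)$. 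The term $t=0$ contributes exactly $q^{-|E|}\prod_i|A_i|^2$, the desired main term, so everything reduces to the error bound $\mathcal E:=\sum_{t\ne0}|T(t)|^2\lesssim\prod_i|A_i|^2$.

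To estimate $\mathcal E$, note that $T(t)$ is the exponential sum over $\prod_iA_i$ of the quadratic form attached to $B(t)\otimes I_d$, where $B(t)$ is the $k\times k$ symmetric weighted adjacency matrix of $G$ carrying $t(\{v_i,v_j\})$ in positions $(i,j),(j,i)$ for $\{v_i,v_j\}\in E$; being symmetric with zero diagonal, $\operatorname{rank}B(t)\in\{0,2,3,\dots,k\}$. One option is to partition the nonzero $t$ by $r=\operatorname{rank}B(t)$, split off the radical of the form, complete the square on the nondegenerate part, and absorb the restriction to $\prod_iA_i$ by Fourier inversion applied to each indicator together with Plancherel, $\sum_m|\widehat{A_i}(m)|^2=q^d|A_i|$ where $\widehat{A_i}(m)=\sum_{a\in A_i}\chi(a\cdot m)$. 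I would instead proceed more in the style of Theorems \ref{simplexjiu} and \ref{chainjiu}: order $v_1,\dots,v_k$ so that every initial segment induces a connected subgraph (possible since $G$ is connected), add one vertex at a time, and show that appending $v_i$ — which imposes the $p_i\ge1$ constraints coming from its back-neighbours — multiplies the running count of matched pairs by at most $q^{-p_i}|A_i|^2(1+\varepsilon)$ for a small $\varepsilon$; since $\sum_i p_i=|E|$, telescoping gives $N\lesssim_k q^{-|E|}\prod_i|A_i|^2$. The hypothesis $|A_i||A_j|\ge Cq^{d+k-1}$ on every edge, together with $k\le d+1$ (which lets the at most $k-1$ back-neighbours of a vertex be linearly independent, so the $p_i$ slicing hyperplanes are in general position), is exactly what makes each step lose only a bounded factor, and $C=C(k)$ absorbs the number of rank strata, i.e.\ of subgraphs.

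The main obstacle I anticipate is the contribution of the degenerate strata — the $t$ with $\operatorname{rank}B(t)$ small, equivalently the error terms of the iteration in which the relevant linear combination $\sum_a s_a x_a$ of back-neighbour vectors degenerates (vanishes or falls into a low-dimensional subspace): there $\widehat{A_i}$ is sampled near the origin where it can be as large as $|A_i|$, and crude estimates (e.g.\ replacing the number of matched partners of a configuration by its maximum) are hopelessly lossy, since degenerate configurations such as the all-zero tuple have far too many partners. Handling this requires using the connectivity of $G$ to bound, uniformly over the stratum, how often $\prod_iA_i$ can be forced into the relevant affine subspaces — that is, counting the degenerate configurations against the correlations already imposed by the earlier edges rather than in isolation. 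This is precisely where the full strength of the edgewise hypothesis is consumed and where obtaining the exponent $d+k-1$, rather than something larger, is the delicate point.
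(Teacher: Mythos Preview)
Your reduction via a single global Cauchy--Schwarz to the collision bound
\[
N=\sum_{\alpha}\nu_{\prod_iA_i}(\alpha)^2\lesssim q^{-|E|}\prod_{i=1}^{k}|A_i|^2
\]
is correct, and the character expansion is set up properly. The gap is that you never actually establish the error bound: both the rank--stratification idea and the vertex--by--vertex iteration on $N$ are left as sketches, and the obstacle you flag is real. In your iteration, appending $v_i$ requires controlling
\[
\sum_{(\vec x,\vec y)\ \text{matched on }[i-1]}\bigl|\{(u,v)\in A_i^2:\ x_b\cdot u=y_b\cdot v\ \text{for all back--neighbours }b\}\bigr|,
\]
a genuinely \emph{two--sided} count in which the back--neighbour tuples $\vec x_B$ and $\vec y_B$ differ. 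The degenerate pairs (e.g.\ those with $\sum_b s_bx_b=0$ or $=\sum_b s_by_b$) are weighted by how many $\vec y$ match a given $\vec x$ on the earlier edges, and this coupling is exactly what prevents a clean step bound; you acknowledge the issue but do not resolve it.

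The paper avoids this difficulty by \emph{not} bounding $N$ at all. It also peels off one vertex at a time, but applies Cauchy--Schwarz \emph{after} freezing the neighbour tuple, so the only second--moment input needed is the one--sided estimate (their Lemma~\ref{yinli})
\[
\sum_{(x_1,\dots,x_\ell)\in A_1\times\cdots\times A_\ell}\ \sum_{t\in\mathbb F_q^\ell}\bigl(\Pi_{x_1,\dots,x_\ell}^{A}(t)\bigr)^2\le q^{-\ell}|A|^2\prod_i|A_i|+q^d|A|\prod_i|A_i|\sum_j|A_j|^{-1},
\]
in which both copies of the new vertex are tested against the \emph{same} $(x_1,\dots,x_\ell)$. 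This is the diagonal of your two--sided count and is strictly easier: it is proved by a short induction on $\ell$ using only Plancherel and a single change of variables, with no rank stratification of $B(t)$ and no need to track correlations across earlier edges. From this lemma a Cauchy--Schwarz step (their Theorem~\ref{dingli}/Corollary~\ref{tuilun}) produces a positive--density set of neighbour tuples on which the projection into the $\ell$ new coordinates already has size $\gtrsim q^\ell$; one then passes to that subset and invokes the inductive hypothesis on the remaining graph $G_1$. Thus the induction is on $|\Pi_G|$ itself rather than on $N$, and the degeneracy problem never arises. Your programme would, if completed, prove the sharper second--moment bound, but the paper's fibered Cauchy--Schwarz is the missing idea that turns the sketch into a proof with the stated exponent $d+k-1$.
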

\begin{theorem}\label{shuxin}
For integers $d\geq1$ and $k\geq2$, there exists $C=C(k)$ such that the following holds. Let $G=G(V, E)$ be a tree with $V=\{v_1, v_2, \ldots, v_{k+1}\}$. If $A_1, A_2, \ldots, A_{k+1}\subseteq\mathbb{F}_q^d$ and $|A_i||A_{j}|\geq Cq^{d+k-1}$ for every $\{v_i ,v_j\}\in E$, then $\Pi_G\left(A_1\times A_2\times\cdots\times A_{k+1}\right)\supseteq\left(\mathbb{F}_q^*\right)^{k}$.
\end{theorem}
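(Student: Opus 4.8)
The plan is to fix a target $\alpha=(\alpha_e)_{e\in E}\in(\mathbb{F}_q^*)^{E}$ (note that $G$, being a tree on $k+1$ vertices, has $|E|=k$, so $(\mathbb{F}_q^*)^E$ is identified with $(\mathbb{F}_q^*)^k$) and show that
$$
\Lambda(\alpha):=\#\bigl\{(x_1,\dots,x_{k+1})\in A_1\times\cdots\times A_{k+1}:\ x_i\cdot x_j=\alpha_{\{i,j\}}\ \text{for all}\ \{i,j\}\in E\bigr\}>0 ,
$$
since $\alpha\in\Pi_G(A_1\times\cdots\times A_{k+1})$ is equivalent to $\Lambda(\alpha)>0$. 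The heuristic value of this count is $q^{-k}\prod_i|A_i|$, so it suffices to prove $\bigl|\Lambda(\alpha)-q^{-k}\prod_i|A_i|\bigr|<q^{-k}\prod_i|A_i|$. One fact is used everywhere: every vertex $v$ lies on an edge $\{v,w\}$ with $|A_v||A_w|\ge Cq^{d+k-1}$, and $|A_w|\le q^d$, so $|A_v|\ge Cq^{\,k-1}$ for all $v$; this is where the hypothesis (and the connectedness of $G$) is used, and it is what lets a large $C=C(k)$ absorb the error terms below. Carrying this out for each $\alpha\in(\mathbb{F}_q^*)^E$ proves the theorem, and the fact that one can treat each nonzero $\alpha$ \emph{individually} — rather than only a positive proportion of them — is exactly what acyclicity buys us over a general graph.

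Root $G$ at $v_1$; for a vertex $v$ let $T_v$ be the subtree below $v$, let $p(v)$ be the parent, and for $z\in\mathbb{F}_q^d$ let $F_v(z)$ count the assignments $x_w\in A_w$ ($w\in T_v$) that satisfy all constraints inside $T_v$ together with $x_v\cdot z=\alpha_{\{v,p(v)\}}$ (drop the last condition when $v=v_1$). Write $M_v:=q^{-|T_v|}\prod_{w\in T_v}|A_w|$. Then
$$
F_v(z)=\sum_{x_v\in A_v:\,x_v\cdot z=\alpha_{\{v,p(v)\}}}\ \prod_{c\ \text{child of}\ v}F_c(x_v),\qquad \Lambda(\alpha)=\sum_{x_1\in A_1}\ \prod_{c\ \text{child of}\ v_1}F_c(x_1),
$$
and since $\alpha_{\{c,v\}}\ne0$ one has $F_c(0)=0$ for every child $c$. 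Hence for an internal vertex $v$ the average of $F_v$ over $\mathbb{F}_q^d$ is \emph{exactly} $q^{-1}\Lambda_{T_v}$, where $\Lambda_{T_v}:=\sum_{x_v\in A_v}\prod_cF_c(x_v)$; and counting the solutions $z$ of $x_v\cdot z=x_v'\cdot z=\alpha_{\{v,p(v)\}}$ — using once more that $\alpha_{\{v,p(v)\}}\ne0$, so $x_v,x_v'$ are never proportional with ratio $\ne1$ — yields the clean bound
$$
\sum_{z\in\mathbb{F}_q^d}\bigl(F_v(z)-q^{-1}\Lambda_{T_v}\bigr)^{2}\ \le\ q^{\,d-1}\,\mathcal{E}_v,\qquad
\mathcal{E}_v:=\sum_{x_v\in A_v}\Bigl(\,\prod_{c\ \text{child of}\ v}F_c(x_v)\Bigr)^{2}.
$$
So $L^2$-control of the fluctuation of $F_v$ reduces to an upper bound on the ``energy'' $\mathcal{E}_v$. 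The base case $v$ a leaf is the pinned dot-product estimate, $F_v(z)=\#\{y\in A_v:y\cdot z=\alpha_{\{v,p(v)\}}\}$, whose $L^2$ deviation from the constant $|A_v|/q$ is bounded by a standard finite-field Fourier computation; here $\alpha_{\{v,p(v)\}}\ne0$ is essential, since for the value $0$ the pinned count is genuinely far from $|A_v|/q$.

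I would then prove by induction from the leaves to the root that $\sum_z(F_v(z)-M_v)^2\le\varepsilon(k)\,q^{\,d}M_v^2$ with $\varepsilon(k)\to0$ as $C\to\infty$, together with $\Lambda_{T_v}=qM_v(1+o(1))$. For the inductive step at an internal vertex $v$ one substitutes $F_c=M_c+E_c$ (with $E_c$ the inductively small fluctuation) into $\mathcal{E}_v$ and into $\Lambda_{T_v}$: the leading term reproduces $|A_v|\prod_cM_c^{2}=q^{2}M_v^{2}/|A_v|$ (respectively $|A_v|\prod_cM_c=qM_v$), while every remaining term is a sum over $A_v$ of a product of the $E_c$, bounded by Cauchy--Schwarz and Hölder from the inductive $L^2$ bounds together with the edge hypotheses $|A_v||A_c|\ge Cq^{d+k-1}$. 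Each such term loses a positive power of $q$ against the leading term, and since the tree has at most $k$ levels the total loss is $O_k(C^{-1/2})$. Feeding the resulting bounds for the children of $v_1$ into $\Lambda(\alpha)=\sum_{x_1\in A_1}\prod_cF_c(x_1)$ — whose leading term is precisely $|A_1|\prod_cM_c=q^{-k}\prod_i|A_i|$ — gives $\Lambda(\alpha)=q^{-k}\prod_i|A_i|\bigl(1+O_k(C^{-1/2})\bigr)>0$.

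The hard part is the inductive step at a vertex $v$ of large degree $r$ (which can be as large as $k$): there one must bound $\sum_{x_v\in A_v}\prod_{c\in I}E_c(x_v)^{a_c}$ for subsets $I$ of children with $\sum_{c\in I}a_c$ up to $r$, and the functions $E_c$ genuinely have large sup-norm (each $F_c$ may vanish identically on a hyperplane slice), so only integrated information about them is available. The ``diagonal'' contributions — $I$ a single child with $a_c=2$ — are handled by the plain $L^2$ bound and exactly match the threshold $|A_v||A_c|\gtrsim q^{d+k-1}$; the genuinely multilinear contributions instead require extracting extra cancellation, using that the $E_c$ for distinct children are built from vertex-disjoint blocks of the data $(A_w)$ and hence have essentially uncorrelated Fourier transforms, and then checking that this cancellation, together with the product lower bounds from the edge hypotheses, keeps every term under control. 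This multilinear estimate, and the bookkeeping that the $q$-losses compound to only $O_k(C^{-1/2})$, is the technical core; the acyclicity of $G$ is used solely through the independent product $\prod_c F_c$ over children in the recursion, which is precisely what breaks down in the presence of a cycle.
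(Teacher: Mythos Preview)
Your recursive framework---root the tree, define $F_v$, and propagate $F_v\approx M_v$ in $L^2$ from the leaves upward---is sound and is a genuinely different route from the paper's. The paper does not recurse: it expands $\Lambda(\alpha)=R_0+R_1+\cdots+R_k$ in one shot via characters, where $R_m$ collects the terms with exactly $m$ of the frequency variables $s_e$ nonzero, and then bounds each $R_m$ by a Schur-type bilinear estimate that exploits the existence of two leaves (with distinct neighbours) in any sub-forest of $G$, together with a separate argument when the $m$ active edges form a star. Your approach trades that global case analysis over edge-subsets for a local induction on subtrees, which is arguably cleaner once it is made to work.

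The step you flag as ``the hard part'' is, however, a genuine gap as written. At a vertex $v$ with children $c_1,\dots,c_r$ and $|I|\ge 3$, the cross term $\sum_{x_v\in A_v}\prod_{i\in I}E_{c_i}(x_v)$ is \emph{not} controlled by the inductive $L^2$ data alone: any estimate that inputs only $\|E_{c_i}\|_{L^2}$---whether H\"older, the generalized Cauchy--Schwarz inequality $\sum_x\prod_i|a_i(x)|\le\prod_i\|a_i\|_2$, or its Fourier-side analogue---necessarily loses a factor $q^{d|I|/2}$, and for $d$ large compared to $k$ this is not absorbed by the edge hypotheses. Your ``essentially uncorrelated Fourier transforms'' heuristic runs into exactly the same obstruction. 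What is missing is an $L^\infty$ input, and you already have it for free: the pointwise inequality $F_c(z)\le\Lambda_{T_c}$ is immediate (every configuration counted on the left is counted on the right), so once the induction gives $\Lambda_{T_c}\le 2qM_c$ you obtain $\|E_c\|_\infty\le 2qM_c$ uniformly at every level of the tree, not only for leaves. Then bound $|I|-2$ of the factors by $\|E_{c_i}\|_\infty\le 2qM_{c_i}$ and apply Cauchy--Schwarz to the remaining two; comparing with the main term $|A_v|\prod_cM_c$ reduces to $q^{|I|-2}\cdot q^{d+1}\lesssim |A_v|\,(|A_{c_j}||A_{c_{j'}}|)^{1/2}$, which follows from the edge hypotheses $|A_v||A_{c_j}|,\,|A_v||A_{c_{j'}}|\ge Cq^{d+k-1}$ together with $|I|\le k$. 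With this single extra observation your induction closes. (The paper's own star estimate, via generalized Cauchy--Schwarz, has the same $q^{d|I|/2}$ weakness when $d>k$ and is repaired by the same $L^\infty$-on-all-but-two device.)
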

\begin{remark}
Theorems \ref{tuxin} and \ref{shuxin} allow points in different sets $A_1, A_2, \ldots, A_{k}$ rather than a single set $A$. Moreover,  Theorem \ref{tuxin} generalizes Theorem \ref{simplexjiu} to general graphs, and Theorem \ref{shuxin} generalizes Theorem \ref{chainjiu} to trees.
\end{remark}

\section{Preliminaries}
Let $f:\mathbb{F}_q^d\rightarrow\mathbb{C}$ be a function and $\chi$ be a nontrivial additive character on $\mathbb{F}_q$. The Fourier transform of $f$ with respect to $\chi$ is given by
$$
\widehat{f}(k)=q^{-d}\sum_{x\in\mathbb{F}_q^d}\chi(-x\cdot k)f(x)
$$
where $k\in \mathbb{F}_q^d$.
And the Fourier inversion theorem is given by
$$
f(x)=\sum_{k\in\mathbb{F}_q^d}\chi(x\cdot k)\widehat{f}(k).
$$
We also need the Plancherel theorem
$$
\sum_{k\in\mathbb{F}_q^d}\left|\widehat{f}(k)\right|^2=q^{-d}\sum_{x\in\mathbb{F}_q^d}\left|f(x)\right|^2.
$$

\section{Proof of Theorem \ref{tuxin}}
In this section, we prove Theorem \ref{tuxin}.

For $A\subseteq\mathbb{F}_q^d$ and $x_1, x_2, \ldots, x_\ell\in\mathbb{F}_q^d$, we define
$$
\Pi_{x_1, x_2, \ldots, x_\ell}\left(A\right)=\{(x_1\cdot y, x_2\cdot y, \ldots, x_\ell\cdot y)\in \mathbb{F}_q^\ell:y\in A\}.
$$
Moreover, for $A\subseteq\mathbb{F}_q^d$, $x_1, x_2, \ldots, x_\ell\in\mathbb{F}_q^d$, and $t_1, t_2, \ldots, t_\ell\in\mathbb{F}_q$, define
$$
\Pi_{x_1, x_2, \ldots, x_\ell}^A\left(t_1, t_2, \ldots, t_\ell\right)=|\{y\in A:x_i\cdot y=t_i, 1\leq i\leq\ell\}|.
$$
If $A$ is a subset of $\mathbb{F}_q^d$, we also view $A$ as its indicator function, i.e. $A(x)=1$ if $x\in A$ and $A(x)=0$ for other $x$ in $\mathbb{F}_q^d$.

\begin{lemma}\label{yinli}
Let $A_1, A_2, \ldots, A_\ell$ and $A$ be subsets of $\mathbb{F}_q^d$. We have
$$
\sum_{(x_1, x_2, \ldots, x_\ell)\in A_1\times A_2\times \cdots\times A_\ell}\sum_{t_1, t_2, \ldots, t_{\ell}\in\mathbb{F}_q}\left(\Pi_{x_1, x_2, \ldots, x_\ell}^A\left(t_1, t_2, \ldots, t_\ell\right)\right)^2\leq\frac{|A|^2\prod_{i=1}^\ell|A_i|}{q^\ell}+q^d|A|\prod_{i=1}^\ell|A_i|\left(\sum_{j=1}^{\ell}\frac{1}{|A_j|}\right).
$$
\end{lemma}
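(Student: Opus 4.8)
The plan is to expand the square and count. Writing out the left-hand side, we have
$$
\sum_{(x_1,\dots,x_\ell)\in A_1\times\cdots\times A_\ell}\sum_{t_1,\dots,t_\ell\in\mathbb{F}_q}\Bigl(\Pi^A_{x_1,\dots,x_\ell}(t_1,\dots,t_\ell)\Bigr)^2
=\sum_{x_i\in A_i}\ \sum_{y,y'\in A}\ \#\{(t_1,\dots,t_\ell): x_i\cdot y=t_i=x_i\cdot y',\ 1\le i\le\ell\},
$$
and the inner count is just the indicator that $x_i\cdot(y-y')=0$ for all $i$. So the whole sum equals $N:=\#\{(x_1,\dots,x_\ell,y,y')\in A_1\times\cdots\times A_\ell\times A\times A: x_i\cdot(y-y')=0\ \forall i\}$. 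Now I would split according to whether $y=y'$ or not. The diagonal $y=y'$ contributes exactly $|A|\prod_i|A_i|$. For the off-diagonal, fix $y\ne y'$ and set $z=y-y'\ne 0$; then I need $\prod_{i=1}^\ell \#\{x_i\in A_i: x_i\cdot z=0\}$, so the off-diagonal term is $\sum_{z\ne 0} r(z)\prod_{i=1}^\ell |A_i\cap z^\perp|$, where $r(z)=\#\{(y,y')\in A\times A: y-y'=z\}$ and $z^\perp$ denotes the hyperplane orthogonal to $z$.

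The key estimate is an upper bound for $|A_i\cap z^\perp|$ valid for each nonzero $z$. Using Fourier inversion / orthogonality, $|A_i\cap z^\perp|=\frac{1}{q}\sum_{s\in\mathbb{F}_q}\sum_{x\in A_i}\chi(s\,x\cdot z)$, which gives $|A_i\cap z^\perp|=\frac{|A_i|}{q}+\frac{1}{q}\sum_{s\ne 0}\sum_{x\in A_i}\chi(x\cdot(sz))$. The error term is controlled by the Fourier coefficients of $A_i$ along the line through $z$; summing over the relevant scalings and applying Cauchy–Schwarz together with the Plancherel theorem ($\sum_k|\widehat{A_i}(k)|^2=q^{-d}|A_i|$) will yield a bound of the shape $|A_i\cap z^\perp|\le \frac{|A_i|}{q}+(\text{something like }q^{d/2})$ on average, or more precisely a bound that after being multiplied across $i=1,\dots,\ell$ and summed against $r(z)$ (with $\sum_{z}r(z)=|A|^2$, $\sum_z r(z)^{?}$ handled crudely by $r(z)\le |A|$) produces the two stated terms: the main term $\frac{|A|^2\prod_i|A_i|}{q^\ell}$ from taking the leading $\frac{|A_i|}{q}$ factor from every $i$, and the error term $q^d|A|\prod_i|A_i|\sum_j\frac{1}{|A_j|}$ from the cross terms where at least one factor is replaced by its fluctuation and the rest are bounded by $|A_i|$. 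One expands $\prod_i\bigl(\frac{|A_i|}{q}+\varepsilon_i(z)\bigr)$, keeps the all-main term exactly, and bounds every other term by pushing all but one $\varepsilon_i$ back up to the trivial bound on $|A_i\cap z^\perp|\le |A_i|$.

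I expect the main obstacle to be the bookkeeping in the error expansion: after opening the product $\prod_i|A_i\cap z^\perp|$ one gets $2^\ell-1$ "error" terms, and I need to show their total is dominated by the single clean expression $q^d|A|\prod_i|A_i|\sum_j |A_j|^{-1}$. The trick is to observe that a term involving fluctuations in a subset $T\subseteq\{1,\dots,\ell\}$ with $|T|\ge 1$, after using $|A_i\cap z^\perp|\le |A_i|$ for $i\notin T$ and Cauchy–Schwarz/Plancherel for the $T$-part, costs roughly $q^{d|T|/2}\prod_{i\in T}|A_i|^{1/2}\cdot\prod_{i\notin T}|A_i|$ after summing in $z$ against $r(z)\le|A|$ (and using $\sum_z r(z)\le |A|^2$ for the $|T|=1$ case separately); checking that each such term is at most $q^d|A|\prod_i|A_i|/|A_j|$ for some $j\in T$ reduces, under the size hypotheses implicit in the application, to elementary inequalities between the $|A_i|$'s and powers of $q$. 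Handling the borderline case $|T|=1$ carefully — where no Cauchy–Schwarz loss occurs and one gets exactly the shape $q^d|A|\prod_i|A_i|/|A_j|$ — is what pins down the precise form of the second term, so I would treat $|T|=1$ first and then absorb all $|T|\ge 2$ contributions into it.
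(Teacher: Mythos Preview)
Your opening reduction is correct and clean: expanding the square gives exactly
\[
N=\sum_{y,y'\in A}\ \prod_{i=1}^\ell |A_i\cap (y-y')^\perp|,
\]
and writing each factor as $|A_i\cap z^\perp|=q^{-1}\sum_{s_i\in\mathbb{F}_q}\sum_{x_i\in A_i}\chi(s_i x_i\cdot z)$ and summing over $y,y'$ yields
\[
N=q^{2d-\ell}\sum_{s_1,\dots,s_\ell\in\mathbb{F}_q}\ \sum_{x_i\in A_i}\bigl|\widehat A\bigl(\textstyle\sum_i s_ix_i\bigr)\bigr|^2,
\]
which is exactly the expression the paper reaches via Plancherel on $\Pi^A_{x_1,\dots,x_\ell}$. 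So up to this point the two approaches coincide.

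The gap is in your error analysis. The lemma is an \emph{unconditional} inequality (no lower bounds on the $|A_i|$), so you cannot invoke ``size hypotheses implicit in the application''. And your proposed Cauchy--Schwarz/H\"older bound for the $|T|\ge 2$ pieces, giving a cost $\sim |A|\,q^{d|T|/2}\prod_{i\in T}|A_i|^{1/2}\prod_{i\notin T}|A_i|$, is too weak: take $\ell=|T|=3$, $|A_1|=|A_2|=|A_3|=1$, $A=\mathbb{F}_q^d$; your bound for the $T=\{1,2,3\}$ term is $|A|\,q^{3d/2}=q^{5d/2}$, whereas the entire error allowed by the lemma is $3q^{2d}$. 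So the $|T|\ge 3$ terms cannot be ``absorbed'' into the $|T|=1$ terms in the way you describe. (Also, your claim that the $|T|=1$ term with index $j$ gives exactly $q^d|A|\prod_i|A_i|/|A_j|$ is off by a favourable factor of $q^{\ell-1}$, but that is not the real problem.)

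The fix is to drop Cauchy--Schwarz entirely and bound each $T$-piece by a single substitution: for any $j\in T$, freeze $(s_i,x_i)_{i\ne j}$ and $s_j\ne 0$, substitute $m=\sum_{i\in T}s_ix_i$, and use $\sum_m|\widehat A(m)|^2=q^{-d}|A|$. This yields
\[
(T\text{-contribution})\le q^{d-\ell}(q-1)^{|T|}\,|A|\prod_{i\ne j}|A_i|,
\]
and choosing $j=\min T$ and summing over $T\ni j$ gives precisely $q^d|A|\prod_i|A_i|/|A_j|$ for each $j$, hence the stated bound. This is exactly the paper's argument, which organizes the same estimate as an induction on $\ell$ by splitting $s_\ell=0$ versus $s_\ell\ne 0$: the $s_\ell\ne 0$ part produces the $j=\ell$ term of the error via the substitution above, and the $s_\ell=0$ part is $|A_\ell|/q$ times the $(\ell-1)$-case.
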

\begin{proof}
Given $A$ and $(x_1, x_2, \ldots, x_\ell)\in A_1\times A_2\times \cdots\times A_\ell$, $\Pi_{x_1, x_2, \ldots, x_\ell}^A\left(t_1, t_2, \ldots, t_\ell\right)$ is a function from $\mathbb{F}_q^\ell$ to $\mathbb{C}$. Let $\chi$ be a non-trivial additive character on $\mathbb{F}_q$. By the orthogonality of $\chi$, we have
\begin{equation}
  \begin{split}
  \Pi_{x_1, x_2, \ldots, x_\ell}^A\left(t_1, t_2, \ldots, t_\ell\right)=&\sum_{y\in A}\prod_{i=1}^\ell\left(q^{-1}\sum_{s_i\in \mathbb{F}_q}\chi(s_i(t_i-x_i\cdot y))\right)\\
=&q^{-\ell}\sum_{y\in \mathbb{F}_q^d}A(y)\prod_{i=1}^\ell\left(\sum_{s_i\in \mathbb{F}_q}\chi(s_i(t_i-x_i\cdot y))\right).\\
  \end{split}
\end{equation}
And hence
\begin{equation}
  \begin{split}
  &\widehat{\Pi}_{x_1, x_2, \ldots, x_\ell}^A\left(s_1, s_2, \ldots, s_\ell\right)\\
  =&q^{-\ell}\sum_{t_1, t_2, \ldots, t_\ell\in \mathbb{F}_q}\chi(-s_1t_1-s_2t_2-\cdots-s_\ell t_\ell)\Pi_{x_1, x_2, \ldots, x_\ell}^A\left(t_1, t_2, \ldots, t_\ell\right)\\
=&q^{-2\ell}\sum_{t_1, t_2, \ldots, t_\ell\in \mathbb{F}_q}\chi(-s_1t_1-s_2t_2-\cdots-s_\ell t_\ell)\sum_{y\in \mathbb{F}_q^d}A(y)\prod_{i=1}^\ell\left(\sum_{s'_i\in \mathbb{F}_q}\chi(s'_i(t_i-x_i\cdot y))\right)\\
=&q^{-2\ell}\sum_{y\in \mathbb{F}_q^d}A(y)\sum_{t_1, t_2, \ldots, t_\ell\in \mathbb{F}_q}\prod_{i=1}^\ell\chi(-s_it_i)\left(\sum_{s'_i\in \mathbb{F}_q}\chi(s'_i(t_i-x_i\cdot y))\right)\\
=&q^{-2\ell}\sum_{y\in \mathbb{F}_q^d}A(y)\prod_{i=1}^\ell\sum_{s'_i\in \mathbb{F}_q}\chi(-s'_ix_i\cdot y))\sum_{t_i\in \mathbb{F}_q}\chi((s'_i-s_i)t_i).\\
  \end{split}
\end{equation}
If $s_i\neq s'_i$ for some $1\leq i\leq \ell$, then $\sum_{t_i\in \mathbb{F}_q}\chi((s'_i-s_i)t_i)=0$. Thus
\begin{equation}\label{fuliye}
  \begin{split}
  &\widehat{\Pi}_{x_1, x_2, \ldots, x_\ell}^A\left(s_1, s_2, \ldots, s_\ell\right)\\
  =&q^{-2\ell}\sum_{y\in \mathbb{F}_q^d}A(y)\prod_{i=1}^\ell\sum_{s'_i\in \mathbb{F}_q}\chi(-s'_ix_i\cdot y))\sum_{t_i\in \mathbb{F}_q}\chi((s'_i-s_i)t_i)\\
=&q^{-2\ell}\sum_{y\in \mathbb{F}_q^d}A(y)\prod_{i=1}^\ell \chi(-s_ix_i\cdot y))\cdot q\\
=&q^{-\ell}\sum_{y\in \mathbb{F}_q^d}A(y)\chi((-s_1x_1-s_2x_2-\cdots-s_\ell x_\ell)\cdot y))\\
=&q^{d-\ell}\widehat{A}(s_1x_1+s_2x_2+\cdots+s_\ell x_\ell).\\
  \end{split}
\end{equation}

In order to prove the lemma, We use induction on $\ell$. When $\ell=1$, Equation (\ref{fuliye}) becomes
$$
\widehat{\Pi}_{x_1}^A\left(s_1\right)=q^{d-1}\widehat{A}(s_1x_1).
$$
It follows that
\begin{equation}
  \begin{split}
\sum_{s_1\in \mathbb{F}_q}\sum_{x_1\in A_1}\left|\widehat{\Pi}_{x_1}^A\left(s_1\right)\right|^2=&q^{2(d-1)}\sum_{s_1\in \mathbb{F}_q}\sum_{x_1\in A_1}\left|\widehat{A}(s_1x_1)\right|^2\\
=&I+II,
  \end{split}
\end{equation}
where $I$ is the case that $s_1=0$ and $II$ is the case that $s_1\neq0$.
\begin{equation}
  \begin{split}
I=&q^{2(d-1)}\sum_{x_1\in A_1}\left|\widehat{A}(0)\right|^2\\
 =&q^{2(d-1)}\sum_{x_1\in A_1}\left|q^{-d}\sum_{y\in \mathbb{F}_q^d}\chi(0)A(y)\right|^2\\
 =&q^{-2}|A|^2|A_1|.
  \end{split}
\end{equation}
and
\begin{equation}
  \begin{split}
II=&q^{2(d-1)}\sum_{s_1\neq0}\sum_{x_1\in A_1}\left|\widehat{A}(s_1x_1)\right|^2\\
 =&q^{2(d-1)}\sum_{s_1\neq0}\sum_{x_1\in \mathbb{F}_q^d}\left|\widehat{A}(s_1x_1)\right|^2A_1(x_1)\\
 =&q^{2(d-1)}\sum_{s_1\neq0}\sum_{m\in \mathbb{F}_q^d}\left|\widehat{A}(m)\right|^2A_1\left(\frac{m}{s_1}\right)\\
 =&q^{2(d-1)}\sum_{m\in \mathbb{F}_q^d}\left|\widehat{A}(m)\right|^2\sum_{s_1\neq0}A_1\left(\frac{m}{s_1}\right).\\
  \end{split}
\end{equation}
Note that $\sum_{s_1\neq0}A_1\left(\frac{m}{s_1}\right)\leq q$ for every $m$ and using Plancherel theorem we have
$$
II\leq q^{2d-1}\sum_{m\in \mathbb{F}_q^d}\left|\widehat{A}(m)\right|^2=q^{2d-1}\cdot q^{-d}\sum_{y\in \mathbb{F}_q^d}\left|{A}(y)\right|^2=q^{d-1}|A|.
$$
Applying Plancherel theorem again, it follows that
$$
\sum_{t_1\in \mathbb{F}_q}\sum_{x_1\in A_1}\left|{\Pi}_{x_1}^A\left(t_1\right)\right|^2=q\sum_{s_1\in \mathbb{F}_q}\sum_{x_1\in A_1}\left|\widehat{\Pi}_{x_1}^A\left(s_1\right)\right|^2\leq q^{-1}|A|^2|A_1|+q^{d}|A|,
$$
concluding the proof for $\ell=1$.

Now suppose the conclusion holds for $\ell-1$ and consider the case $\ell$. Applying Equation (\ref{fuliye}) again, we have
\begin{equation}
  \begin{split}
&\sum_{(x_1, x_2, \ldots, x_\ell)\in A_1\times A_2\times \cdots\times A_\ell}\sum_{s_1, s_2, \ldots, s_{\ell}\in\mathbb{F}_q}\left|\widehat{\Pi}_{x_1, x_2, \ldots, x_\ell}^A\left(s_1, s_2, \ldots, s_\ell\right)\right|^2\\
=&q^{2(d-\ell)}\sum_{(x_1, x_2, \ldots, x_\ell)\in A_1\times A_2\times \cdots\times A_\ell}\sum_{s_1, s_2, \ldots, s_{\ell}\in\mathbb{F}_q}\left|\widehat{A}(s_1x_1+s_2x_2+\cdots+s_\ell x_\ell)\right|^2\\
=&III+IV,
  \end{split}
\end{equation}
where $III$ is the case that $s_k=0$ and $IV$ is the case that $s_k\neq0$.
\begin{equation}
  \begin{split}
III=&q^{2(d-\ell)}\sum_{(x_1, x_2, \ldots, x_\ell)\in A_1\times A_2\times \cdots\times A_\ell}\sum_{s_1, s_2, \ldots, s_{\ell-1}\in\mathbb{F}_q}\left|\widehat{A}(s_1x_1+s_2x_2+\cdots+s_{\ell-1} x_{\ell-1})\right|^2\\
=&q^{2(d-\ell)}|A_\ell|\sum_{(x_1, x_2, \ldots, x_{\ell-1})\in A_1\times A_2\times \cdots\times A_{\ell-1}}\sum_{s_1, s_2, \ldots, s_{\ell-1}\in\mathbb{F}_q}\left|\widehat{A}(s_1x_1+s_2x_2+\cdots+s_{\ell-1} x_{\ell-1})\right|^2\\
=&q^{2(d-\ell)}|A_\ell|q^{-2(d-\ell+1)}\sum_{(x_1, x_2, \ldots, x_{\ell-1})\in A_1\times A_2\times \cdots\times A_{\ell-1}}\sum_{s_1, s_2, \ldots, s_{\ell-1}\in\mathbb{F}_q}\left|\widehat{\Pi}_{x_1, x_2, \ldots, x_{\ell-1}}^A\left(s_1, s_2, \ldots, s_{\ell-1}\right)\right|^2\\
=&q^{-2}|A_\ell|\sum_{(x_1, x_2, \ldots, x_{\ell-1})\in A_1\times A_2\times \cdots\times A_{\ell-1}}\sum_{s_1, s_2, \ldots, s_{\ell-1}\in\mathbb{F}_q}\left|\widehat{\Pi}_{x_1, x_2, \ldots, x_{\ell-1}}^A\left(s_1, s_2, \ldots, s_{\ell-1}\right)\right|^2.\\
  \end{split}
\end{equation}
Applying Plancherel theorem and the inductive hypothesis, we obtain
\begin{equation}
  \begin{split}
&\sum_{(x_1, x_2, \ldots, x_{\ell-1})\in A_1\times A_2\times \cdots\times A_{\ell-1}}\sum_{s_1, s_2, \ldots, s_{\ell-1}\in\mathbb{F}_q}\left|\widehat{\Pi}_{x_1, x_2, \ldots, x_{\ell-1}}^A\left(s_1, s_2, \ldots, s_{\ell-1}\right)\right|^2\\
=&\sum_{(x_1, x_2, \ldots, x_{\ell-1})\in A_1\times A_2\times \cdots\times A_{\ell-1}}\sum_{t_1, t_2, \ldots, t_{\ell-1}\in\mathbb{F}_q}q^{-\ell+1}\left|{\Pi}_{x_1, x_2, \ldots, x_{\ell-1}}^A\left(t_1, t_2, \ldots, t_{\ell-1}\right)\right|^2\\
\leq&q^{-\ell+1}\left(\frac{|A|^2\prod_{i=1}^{\ell-1}|A_i|}{q^{\ell-1}}+q^d|A|\prod_{i=1}^{\ell-1}|A_i|\left(\sum_{j=1}^{\ell-1}\frac{1}{|A_j|}\right)\right).\\
  \end{split}
\end{equation}
Therefore,
\begin{equation}
  \begin{split}
III\leq&\frac{|A|^2\prod_{i=1}^{\ell}|A_i|}{q^{2\ell}}+q^{d-\ell-1}|A|\prod_{i=1}^{\ell}|A_i|\left(\sum_{j=1}^{\ell-1}\frac{1}{|A_j|}\right)\\
\leq&\frac{|A|^2\prod_{i=1}^{\ell}|A_i|}{q^{2\ell}}+q^{d-\ell}|A|\prod_{i=1}^{\ell}|A_i|\left(\sum_{j=1}^{\ell-1}\frac{1}{|A_j|}\right).\\
  \end{split}
\end{equation}
For $IV$, we have
\begin{equation}
  \begin{split}
  IV=&q^{2(d-\ell)}\sum_{(x_1, x_2, \ldots, x_\ell)\in A_1\times A_2\times \cdots\times A_\ell}\sum_{\substack{s_1, s_2, \ldots, s_{\ell-1}\in\mathbb{F}_q\\s_\ell\neq0}}\left|\widehat{A}(s_1x_1+s_2x_2+\cdots+s_\ell x_\ell)\right|^2\\
  =&q^{2(d-\ell)}\sum_{(x_1, x_2, \ldots, x_{\ell-1})\in A_1\times A_2\times \cdots\times A_{\ell-1}}\sum_{\substack{s_1, s_2, \ldots, s_{\ell-1}\in\mathbb{F}_q\\s_\ell\neq0}}\sum_{x_\ell\in\mathbb{F}_q^d}A(x_\ell)\left|\widehat{A}(s_1x_1+s_2x_2+\cdots+s_\ell x_\ell)\right|^2\\
    =&q^{2(d-\ell)}\sum_{(x_1, x_2, \ldots, x_{\ell-1})\in A_1\times A_2\times \cdots\times A_{\ell-1}}\sum_{\substack{s_1, s_2, \ldots, s_{\ell-1}\in\mathbb{F}_q\\s_\ell\neq0}}\sum_{m\in\mathbb{F}_q^d}A(s_\ell^{-1}m+s_1x_1+s_2x_2+\cdots+s_{\ell-1} x_{\ell-1})\left|\widehat{A}(m)\right|^2.\\
  \end{split}
\end{equation}
Since
$$
\sum_{\substack{s_1, s_2, \ldots, s_{\ell-1}\in\mathbb{F}_q\\s_\ell\neq0}}A(s_\ell^{-1}m+s_1x_1+s_2x_2+\cdots+s_{\ell-1} x_{\ell-1})\leq q^\ell,
$$
it follows that
\begin{equation}
  \begin{split}
  IV\leq&q^{2d-\ell}\sum_{(x_1, x_2, \ldots, x_{\ell-1})\in A_1\times A_2\times \cdots\times A_{\ell-1}}\sum_{m\in\mathbb{F}_q^d}\left|\widehat{A}(m)\right|^2\\
  =&q^{2d-\ell}\prod_{i=1}^{\ell-1}|A_i|\cdot \sum_{m\in\mathbb{F}_q^d}\left|\widehat{A}(m)\right|^2\\
  =&q^{2d-\ell}\prod_{i=1}^{\ell-1}|A_i|\cdot q^{-d}\sum_{y\in\mathbb{F}_q^d}\left|{A}(y)\right|^2\\
  =&q^{d-\ell}|A|\prod_{i=1}^{\ell-1}|A_i|.\\
  \end{split}
\end{equation}
Thus
\begin{equation}
  \begin{split}
III+IV\leq&\frac{|A|^2\prod_{i=1}^{\ell}|A_i|}{q^{2\ell}}+q^{d-\ell}|A|\prod_{i=1}^{\ell}|A_i|\left(\sum_{j=1}^{\ell-1}\frac{1}{|A_j|}\right)+q^{d-\ell}|A|\prod_{i=1}^{\ell-1}|A_i|\\
=&\frac{|A|^2\prod_{i=1}^{\ell}|A_i|}{q^{2\ell}}+q^{d-\ell}|A|\prod_{i=1}^{\ell}|A_i|\left(\sum_{j=1}^{\ell}\frac{1}{|A_j|}\right).\\
  \end{split}
\end{equation}
Finally applying Plancherel theorem, we conclude that
\begin{equation}
  \begin{split}
&\sum_{(x_1, x_2, \ldots, x_\ell)\in A_1\times A_2\times \cdots\times A_\ell}\sum_{t_1, t_2, \ldots, t_{\ell}\in\mathbb{F}_q}\left(\Pi_{x_1, x_2, \ldots, x_\ell}^A\left(t_1, t_2, \ldots, t_\ell\right)\right)^2\\
=&q^\ell\sum_{(x_1, x_2, \ldots, x_\ell)\in A_1\times A_2\times \cdots\times A_\ell}\sum_{s_1, s_2, \ldots, s_{\ell}\in\mathbb{F}_q}\left|\widehat{\Pi}_{x_1, x_2, \ldots, x_\ell}^A\left(s_1, s_2, \ldots, s_\ell\right)\right|^2\\
\leq&\frac{|A|^2\prod_{i=1}^{\ell}|A_i|}{q^{\ell}}+q^{d}|A|\prod_{i=1}^{\ell}|A_i|\left(\sum_{j=1}^{\ell}\frac{1}{|A_j|}\right).
  \end{split}
\end{equation}
It completes the proof.
\end{proof}

\begin{theorem}\label{dingli}
For integers $d$ and $\ell$ with $2\leq \ell\leq d$, there exists $C=C(\ell)$ such that the following holds. Given $A_1, A_2, \ldots, A_{\ell}, A_{\ell+1}\subseteq\mathbb{F}_q^d$, let $S\subseteq\prod_{i=1}^{\ell+1}A_i$ with $|S|\sim\prod_{i=1}^{\ell+1}|A_i|$. Define
$$
S'=\{(x_1, x_2, \ldots, x_{\ell})\in \prod_{i=1}^{\ell}A_i:(x_1, x_2, \ldots, x_\ell, x_{\ell+1})\in S \text{ for some } x_{\ell+1}\in A_{\ell+1}\}.
$$
And for every $(x_1, x_2, \ldots, x_\ell)\in S'$ we define
$$
S(x_1, x_2, \ldots, x_\ell)=\{x_{\ell+1}\in A_{\ell+1}: (x_1, x_2, \ldots, x_\ell, x_{\ell+1})\in S\}.
$$
If $|A_i||A_{\ell+1}|\geq Cq^{d+\ell}$ for $1\leq i\leq\ell$, then
$$
\frac{1}{|S'|}\sum_{(x_1, x_2, \ldots, x_\ell)\in S'}\left|\Pi_{x_1, x_2, \ldots, x_\ell}\left(S(x_1, x_2, \ldots, x_\ell)\right)\right|\gtrsim q^\ell,
$$
where for $(x_1, x_2, \ldots, x_\ell)\in S'$, $\Pi_{x_1, x_2, \ldots, x_\ell}\left(S(x_1, x_2, \ldots, x_\ell)\right)=\{(x_1\cdot x_{\ell+1}, x_2\cdot x_{\ell+1}, \ldots, x_\ell\cdot x_{\ell+1})\in \mathbb{F}_q^\ell:x_{\ell+1}\in S(x_1, x_2, \ldots, x_\ell)\}$.
\end{theorem}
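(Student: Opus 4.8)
I would prove Theorem~\ref{dingli} by reducing it to Lemma~\ref{yinli} via two applications of the Cauchy--Schwarz inequality. Write $B=A_{\ell+1}$, and for $\mathbf{x}=(x_1,\dots,x_\ell)\in S'$ set $S_{\mathbf{x}}=S(x_1,\dots,x_\ell)\subseteq B$ and $\nu_{\mathbf{x}}(\mathbf{t})=\Pi^{S_{\mathbf{x}}}_{x_1,\dots,x_\ell}(t_1,\dots,t_\ell)$ for $\mathbf{t}=(t_1,\dots,t_\ell)\in\mathbb{F}_q^\ell$; note that the size hypothesis forces every $A_i$ to be nonempty, so $S\neq\varnothing$, $S'\neq\varnothing$, and each $S_{\mathbf{x}}$ is nonempty. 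The support of the function $\mathbf{t}\mapsto\nu_{\mathbf{x}}(\mathbf{t})$ is exactly $\Pi_{x_1,\dots,x_\ell}(S_{\mathbf{x}})$, and $\sum_{\mathbf{t}}\nu_{\mathbf{x}}(\mathbf{t})=|S_{\mathbf{x}}|$, so Cauchy--Schwarz gives $|S_{\mathbf{x}}|^2\le\left|\Pi_{x_1,\dots,x_\ell}(S_{\mathbf{x}})\right|\sum_{\mathbf{t}}\nu_{\mathbf{x}}(\mathbf{t})^2$. Putting $a_{\mathbf{x}}=|S_{\mathbf{x}}|$ and $b_{\mathbf{x}}=\sum_{\mathbf{t}}\nu_{\mathbf{x}}(\mathbf{t})^2>0$, dividing, summing over $\mathbf{x}\in S'$, and using the elementary bound $\sum_{\mathbf{x}}a_{\mathbf{x}}^2/b_{\mathbf{x}}\ge\left(\sum_{\mathbf{x}}a_{\mathbf{x}}\right)^2\big/\sum_{\mathbf{x}}b_{\mathbf{x}}$ together with $\sum_{\mathbf{x}\in S'}|S_{\mathbf{x}}|=|S|$, I obtain
$$
\sum_{\mathbf{x}\in S'}\left|\Pi_{x_1,\dots,x_\ell}(S_{\mathbf{x}})\right|\ \ge\ \frac{|S|^2}{Z},\qquad Z:=\sum_{\mathbf{x}\in S'}\ \sum_{\mathbf{t}\in\mathbb{F}_q^\ell}\nu_{\mathbf{x}}(\mathbf{t})^2 .
$$

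Next I would bound $Z$. Since $S_{\mathbf{x}}\subseteq B=A_{\ell+1}$ one has $\nu_{\mathbf{x}}(\mathbf{t})\le\Pi^{A_{\ell+1}}_{x_1,\dots,x_\ell}(\mathbf{t})$ for every $\mathbf{t}$, and since $S'\subseteq\prod_{i=1}^\ell A_i$ the sum over $\mathbf{x}$ may be enlarged, giving
$$
Z\ \le\ \sum_{(x_1,\dots,x_\ell)\in A_1\times\cdots\times A_\ell}\ \sum_{t_1,\dots,t_\ell\in\mathbb{F}_q}\left(\Pi^{A_{\ell+1}}_{x_1,\dots,x_\ell}(t_1,\dots,t_\ell)\right)^2 .
$$
This is exactly the left-hand side of Lemma~\ref{yinli} with the lemma's sets $A_1,\dots,A_\ell$ taken to be our $A_1,\dots,A_\ell$ and its set $A$ taken to be $A_{\ell+1}$, so
$$
Z\ \le\ \frac{|A_{\ell+1}|^2\prod_{i=1}^\ell|A_i|}{q^\ell}+q^d|A_{\ell+1}|\prod_{i=1}^\ell|A_i|\sum_{j=1}^\ell\frac{1}{|A_j|}.
$$
Invoking the hypothesis $|A_j||A_{\ell+1}|\ge Cq^{d+\ell}$, i.e. $1/|A_j|\le|A_{\ell+1}|/(Cq^{d+\ell})$ for $1\le j\le\ell$, bounds the second term by $(\ell/C)\,|A_{\ell+1}|^2\prod_{i=1}^\ell|A_i|/q^\ell$, whence $Z\le\left(1+\tfrac{\ell}{C}\right)|A_{\ell+1}|^2\prod_{i=1}^\ell|A_i|/q^\ell$.

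Finally I would assemble the pieces using $|S|\sim\prod_{i=1}^{\ell+1}|A_i|$ and the trivial bound $|S'|\le\prod_{i=1}^\ell|A_i|$:
$$
\frac{1}{|S'|}\sum_{\mathbf{x}\in S'}\left|\Pi_{x_1,\dots,x_\ell}(S_{\mathbf{x}})\right|\ \ge\ \frac{|S|^2}{|S'|\,Z}\ \gtrsim\ \frac{\left(\prod_{i=1}^{\ell+1}|A_i|\right)^2}{\prod_{i=1}^\ell|A_i|\cdot\left(1+\tfrac{\ell}{C}\right)|A_{\ell+1}|^2\prod_{i=1}^\ell|A_i|/q^\ell}\ =\ \frac{q^\ell}{1+\ell/C},
$$
which is $\gtrsim q^\ell$ once $C\ge\ell$, say, and this fixes the constant $C=C(\ell)$. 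I expect the only real subtlety to be the bookkeeping in the two Cauchy--Schwarz steps, in particular the observation that replacing each fiber $S_{\mathbf{x}}$ by the ambient set $A_{\ell+1}$ only \emph{increases} $Z$, which is precisely what makes Lemma~\ref{yinli} applicable; the remaining manipulations with the size hypotheses are routine. (The constraint $\ell\le d$ is not used in the argument; it is in any case forced by $|A_i|,|A_{\ell+1}|\le q^d$ and the size hypothesis.)
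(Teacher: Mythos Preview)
Your proposal is correct and follows essentially the same approach as the paper. The paper applies Cauchy--Schwarz in a single step over the index set $\{(\mathbf{x},\mathbf{t}):\mathbf{x}\in S',\ \mathbf{t}\in\operatorname{supp}\nu_{\mathbf{x}}\}$ to obtain $|S|^2\le\bigl(\sum_{\mathbf{x}}|\Pi_{x_1,\dots,x_\ell}(S_{\mathbf{x}})|\bigr)\cdot Z$ directly, whereas you reach the identical inequality via a per-fiber Cauchy--Schwarz followed by the Engel form; after that, both arguments bound $Z$ by enlarging $S_{\mathbf{x}}$ to $A_{\ell+1}$ and $S'$ to $\prod_{i\le\ell}A_i$, invoke Lemma~\ref{yinli}, and choose $C=\ell$ so that the second term is dominated by the first.
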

\begin{proof}
By the definition, we have
$$
\Pi_{x_1, x_2, \ldots, x_\ell}^{S(x_1, x_2, \ldots, x_{\ell})}\left(t_1, t_2, \ldots, t_\ell\right)\leq\Pi_{x_1, x_2, \ldots, x_\ell}^{A_{\ell+1}}\left(t_1, t_2, \ldots, t_\ell\right).
$$
Applying Cauchy-Schwartz inequality, we have
\begin{equation}
  \begin{split}
|S|^2=&\left(\sum_{(x_1, x_2, \ldots, x_{\ell})\in S'}\sum_{t_1, t_2, \ldots, t_{\ell}\in \mathbb{F}_q}\Pi_{x_1, x_2, \ldots, x_\ell}^{S(x_1, x_2, \ldots, x_{\ell})}\left(t_1, t_2, \ldots, t_\ell\right)\right)^2\\
\leq&\left(\sum_{(x_1, x_2, \ldots, x_{\ell})\in S'}\left|\Pi_{x_1, x_2, \ldots, x_\ell}\left(S(x_1, x_2, \ldots, x_{\ell})\right)\right|\right)\cdot\\
&\left(\sum_{(x_1, x_2, \ldots, x_{\ell})\in S'}\sum_{t_1, t_2, \ldots, t_{\ell}\in \mathbb{F}_q}\left(\Pi_{x_1, x_2, \ldots, x_\ell}^{S(x_1, x_2, \ldots, x_{\ell})}\left(t_1, t_2, \ldots, t_\ell\right)\right)^2\right)\\
\leq&\left(\sum_{(x_1, x_2, \ldots, x_{\ell})\in S'}\left|\Pi_{x_1, x_2, \ldots, x_\ell}\left(S(x_1, x_2, \ldots, x_{\ell})\right)\right|\right)\cdot\\
&\left(\sum_{(x_1, x_2, \ldots, x_{\ell})\in A_1\times A_2\times\cdots\times A_\ell}\sum_{t_1, t_2, \ldots, t_{\ell}\in \mathbb{F}_q}\left(\Pi_{x_1, x_2, \ldots, x_\ell}^{A_{\ell+1}}\left(t_1, t_2, \ldots, t_\ell\right)\right)^2\right)\\
\leq&\left(\sum_{(x_1, x_2, \ldots, x_{\ell})\in S'}\left|\Pi_{x_1, x_2, \ldots, x_\ell}\left(S(x_1, x_2, \ldots, x_{\ell})\right)\right|\right)\cdot\\
&\left(\frac{|A_{\ell+1}|^2\prod_{i=1}^{\ell}|A_i|}{q^{\ell}}+q^{d}|A_{\ell+1}|\prod_{i=1}^{\ell}|A_i|\left(\sum_{j=1}^{\ell}\frac{1}{|A_j|}\right)\right),\\
  \end{split}
\end{equation}
where we use Lemma \ref{yinli} in the last inequality. Put $C=\ell$ so that
$$
\frac{|A_{\ell+1}|^2\prod_{i=1}^{\ell}|A_i|}{\ell q^{\ell}}\geq q^{d}|A_{\ell+1}|\prod_{i=1}^{\ell}|A_i|\left(\frac{1}{|A_j|}\right)
$$
for every $1\leq j\leq\ell$. Thus
\begin{equation}
  \begin{split}
|S|^2\leq&\left(\sum_{(x_1, x_2, \ldots, x_{\ell})\in S'}\left|\Pi_{x_1, x_2, \ldots, x_\ell}\left(S(x_1, x_2, \ldots, x_{\ell})\right)\right|\right)\cdot\\
&\left(\frac{|A_{\ell+1}|^2\prod_{i=1}^{\ell}|A_i|}{q^{\ell}}+q^{d}|A_{\ell+1}|\prod_{i=1}^{\ell}|A_i|\left(\sum_{j=1}^{\ell}\frac{1}{|A_j|}\right)\right)\\
\leq&\left(\sum_{(x_1, x_2, \ldots, x_{\ell})\in S'}\left|\Pi_{x_1, x_2, \ldots, x_\ell}\left(S(x_1, x_2, \ldots, x_{\ell})\right)\right|\right)\cdot2\left(\frac{|A_{\ell+1}|^2\prod_{i=1}^{\ell}|A_i|}{q^{\ell}}\right).\\
  \end{split}
\end{equation}
Since $|S|\sim\prod_{i=1}^{\ell+1}|A_i|$, it follows that $S'\sim\prod_{i=1}^{\ell}|A_i|$. We conclude that
$$
\frac{1}{|S'|}\sum_{(x_1, x_2, \ldots, x_\ell)\in S'}\left|\Pi_{x_1, x_2, \ldots, x_\ell}\left(S(x_1, x_2, \ldots, x_\ell)\right)\right|\geq\frac{|S|^2}{|S'|\cdot2\left(\frac{|A_{\ell+1}|^2\prod_{i=1}^{\ell}|A_i|}{q^{\ell}}\right)}\sim q^\ell.
$$

\end{proof}
The following corollary is immediate.
\begin{corollary}\label{tuilun}
For integers $d$ and $\ell$ with $2\leq \ell\leq d$, there exists $C=C(\ell)$ such that the following holds. Given $A_1, A_2, \ldots, A_{\ell}, A_{\ell+1}\subseteq\mathbb{F}_q^d$, let $S\subseteq\prod_{i=1}^{\ell+1}A_i$ with $|S|\sim\prod_{i=1}^{\ell+1}|A_i|$ and $S'$ as in Theorem \ref{dingli}. If $|A_i||A_{\ell+1}|\geq Cq^{d+\ell}$ for $1\leq i\leq\ell$, then there exists $S_1\subseteq S'\subseteq\prod_{i=1}^{\ell}A_i$ with $|S_1|\sim|S'|\sim\prod_{i=1}^{\ell}|A_i|$ such that
$$
\left|\Pi_{x_1, x_2, \ldots, x_\ell}\left(S(x_1, x_2, \ldots, x_\ell)\right)\right|\gtrsim q^\ell,
$$
for every $(x_1, x_2, \ldots, x_\ell)\in S_1$.
\end{corollary}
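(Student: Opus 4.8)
The plan is to deduce Corollary \ref{tuilun} from Theorem \ref{dingli} by a routine averaging (Markov-type) argument together with the trivial upper bound on each summand.

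First I would observe that for every $(x_1, x_2, \ldots, x_\ell)\in S'$ the quantity $\left|\Pi_{x_1, x_2, \ldots, x_\ell}\left(S(x_1, x_2, \ldots, x_\ell)\right)\right|$ is the cardinality of a subset of $\mathbb{F}_q^\ell$, hence is at most $q^\ell$. On the other hand, by Theorem \ref{dingli} there is a constant $c=c(\ell)>0$ with
$$
\frac{1}{|S'|}\sum_{(x_1, x_2, \ldots, x_\ell)\in S'}\left|\Pi_{x_1, x_2, \ldots, x_\ell}\left(S(x_1, x_2, \ldots, x_\ell)\right)\right|\geq c\,q^\ell.
$$

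Next I would put $S_1=\{(x_1, x_2, \ldots, x_\ell)\in S':\left|\Pi_{x_1, x_2, \ldots, x_\ell}\left(S(x_1, x_2, \ldots, x_\ell)\right)\right|\geq \tfrac{c}{2}q^\ell\}$ and split the above sum over $S_1$ and $S'\setminus S_1$. Bounding the summands over $S_1$ by $q^\ell$ and those over $S'\setminus S_1$ by $\tfrac{c}{2}q^\ell$ gives $c\,q^\ell\leq \tfrac{|S_1|}{|S'|}q^\ell+\tfrac{c}{2}q^\ell$, hence $|S_1|\geq\tfrac{c}{2}|S'|$; combined with the trivial bound $|S_1|\leq|S'|$ this yields $|S_1|\sim|S'|$. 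Since the proof of Theorem \ref{dingli} already established $|S'|\sim\prod_{i=1}^{\ell}|A_i|$, we obtain $|S_1|\sim\prod_{i=1}^{\ell}|A_i|$, and by the very definition of $S_1$ we have $\left|\Pi_{x_1, x_2, \ldots, x_\ell}\left(S(x_1, x_2, \ldots, x_\ell)\right)\right|\gtrsim q^\ell$ for every $(x_1, x_2, \ldots, x_\ell)\in S_1$, which is exactly the assertion.

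There is essentially no obstacle here: all the substance lies in Theorem \ref{dingli}, and the corollary is merely the remark that an average comparable to the maximum forces a positive proportion of the individual terms to be comparable to the maximum. The only point worth keeping track of is that every implied constant depends solely on $\ell$ (and on the implicit constant in the hypothesis $|S|\sim\prod_{i=1}^{\ell+1}|A_i|$), and in particular not on $q$ or $d$.
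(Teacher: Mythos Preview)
Your argument is correct and is precisely the standard averaging (Markov-type) step that the paper leaves implicit when it declares the corollary ``immediate.'' There is nothing to add: the upper bound $q^\ell$ on each summand together with the lower bound on the average from Theorem~\ref{dingli} forces a positive proportion of the terms to be $\gtrsim q^\ell$, and the fact $|S'|\sim\prod_{i=1}^{\ell}|A_i|$ was already noted in the proof of Theorem~\ref{dingli}.
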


We are ready to prove Theorem \ref{tuxin}.
\begin{proof}[Proof of Theorem \ref{tuxin}]
We use induction on $k$.

When $k=2$, $G$ has two vertices and one edge. Applying Corollary \ref{tuilun} with $\ell=1$ yields that there exists $S_1\subseteq S'\subseteq A_1$ with $|S_1|\sim|S'|\sim|A_1|$ such that
$$
\left|\Pi_{x_1}\left(S(x_1)\right)\right|\gtrsim q,
$$
for every $x_1\in S_1$. Recall that $\Pi_{x_1}\left(S(x_1)\right)=\{x_1\cdot y\in \mathbb{F}_q:y\in S(x_1)\}$ and $\Pi_{G}(S)=\{x\cdot y\in \mathbb{F}_q:(x, y)\in S\}$. So $\Pi_{x_1}\left(S(x_1)\right)\subseteq\Pi_{G}(S)$, and hence $\left|\Pi_{G}(S)\right|\gtrsim \left|\Pi_{x_1}\left(S(x_1)\right)\right|\gtrsim q.$

Now suppose the conclusion holds for k and consider the case $k_+1$. Since $G$ is connected, there exists a vertex such that after removing this vertex the resulting graph is still connected. Without loss generality, we can assume that by removing $v_{k+1}$, the resulting graph $G_1:=G[v_1, v_2, \ldots, v_{k}]$ is connected. Let $E_1$ be the edge set of $G_1$. Moreover, assume that the neighbor of $v_{i+1}$ in $G_i$ is $\{v_1, v_2, \ldots, v_\ell\}$ and hence $E=E_1\bigcup\left(\bigcup_{i=1}^\ell\{v_i, v_{k+1}\}\right)$.

Let $A_1, A_2, \ldots, A_{k+1}\subseteq\mathbb{F}_q^d$ satisfy $|A_i||A_{j}|\geq Cq^{d+k}$ for every $\{v_i, v_j\}\in E$ and $S\subseteq\prod_{i=1}^{k+1}A_i$ with $|S|\sim\prod_{i=1}^{k+1}|A_i|$. Define
$$
\tilde{S}=\{(x_1, x_2, \ldots, x_{\ell}, x_{k+1})\in \prod_{i=1}^{\ell}A_i\times A_{k+1}:(x_1, x_2, \ldots, x_\ell, x_{\ell+1}, \ldots, x_k, x_{k+1})\in S \text{ for some } (x_{\ell+1}, \ldots, x_{k})\}.
$$
Then $|\tilde{S}|\sim\prod_{i=1}^{\ell}|A_i|\cdot |A_{k+1}|$. Moreover, let
$$
\tilde{S}'=\{(x_1, x_2, \ldots, x_{\ell})\in \prod_{i=1}^{\ell}A_i:(x_1, x_2, \ldots, x_\ell, x_{k+1})\in \tilde{S} \text{ for some } x_{k+1}\}
$$
and
$$
\tilde{S}(x_1, x_2, \ldots, x_\ell)=\{x_{k+1}\in A_{k+1}: (x_1, x_2, \ldots, x_\ell, x_{k+1})\in \tilde{S}\}.
$$
Since $|A_i||A_{k+1}|\geq Cq^{d+k}\geq Cq^{d+\ell}$ for $1\leq i\leq\ell$, applying Corollary \ref{tuilun} with $\tilde{S}$ and $\tilde{S}'$ yields that there exists $\tilde{S}_1\subseteq \tilde{S}'\subseteq\prod_{i=1}^{\ell}A_i$ with $|\tilde{S}_1|\sim|\tilde{S}'|\sim\prod_{i=1}^\ell|A_i|$, such that for every $(x_1, x_2, \ldots, x_\ell)\in \tilde{S}_1$, we have
\begin{equation}\label{houmian}
\Pi_{x_1, x_2, \ldots, x_\ell}(\tilde{S}(x_1, x_2, \ldots, x_\ell))\gtrsim q^\ell.
\end{equation}
Now let
$$
S_2=\{(x_1, x_2, \ldots, x_{k}): (x_1, x_2, \ldots, x_{\ell})\in \tilde{S}_1 \text{ and } (x_1, x_2, \ldots, x_{k}, x_{k+1})\in S\}.
$$
So $|S_2|\sim \prod_{i=1}^k|A_i|$ and by the inductive hypothesis
\begin{equation}\label{qianmian}
\left|\Pi_{G_1}(S_2)\right|\gtrsim q^{|E_1|}.
\end{equation}
Combining inequalities (\ref{houmian}) and (\ref{qianmian}), the proof is completed.
\end{proof}

\section{Proof of Theorem \ref{shuxin}}
In this section, we prove Theorem \ref{shuxin}, where the graph is a tree.

For $A\subseteq\mathbb{F}_q^d$, $x\in\mathbb{F}_q^d$ and $\alpha\in\mathbb{F}_q^*$, let
$$
S_{A, \alpha}(x)=\sum_{s\in\mathbb{F}_q^*}\sum_{y\in A}\chi(s(x\cdot y-\alpha)).
$$
Let $B\subseteq\mathbb{F}_q^d$ be another set, not necessarily equal to $A$. Before proving Theorem \ref{shuxin}, we have the following proposition.
\begin{proposition}\label{main}
For $\alpha\neq0$, we have
$$
\left|\sum_{x\in B}S_{A, \alpha}(x)\right|\leq(|A||B|)^{1/2}q^{(d+1)/2}.
$$
\end{proposition}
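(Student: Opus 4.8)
The plan is to write $\sum_{x\in B}S_{A,\alpha}(x)$ as an additive character sum, apply the Cauchy--Schwarz inequality to extract the factor $|B|$, and then bound the second moment $\sum_{x\in\mathbb{F}_q^d}\left|S_{A,\alpha}(x)\right|^2$ by orthogonality. Concretely, viewing $B$ as its indicator function, Cauchy--Schwarz gives
$$
\left|\sum_{x\in B}S_{A,\alpha}(x)\right|^2=\left|\sum_{x\in\mathbb{F}_q^d}B(x)S_{A,\alpha}(x)\right|^2\leq|B|\sum_{x\in\mathbb{F}_q^d}\left|S_{A,\alpha}(x)\right|^2,
$$
so it suffices to prove $\sum_{x\in\mathbb{F}_q^d}\left|S_{A,\alpha}(x)\right|^2\leq q^{d+1}|A|$. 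I would expand the square as
$$
\left|S_{A,\alpha}(x)\right|^2=\sum_{s,s'\in\mathbb{F}_q^*}\sum_{y,y'\in A}\chi\big(x\cdot(sy-s'y')\big)\,\chi\big((s'-s)\alpha\big),
$$
and then sum over $x\in\mathbb{F}_q^d$, using that $\sum_{x}\chi(x\cdot m)$ equals $q^d$ when $m=0$ and $0$ otherwise. This collapses the expression to
$$
\sum_{x\in\mathbb{F}_q^d}\left|S_{A,\alpha}(x)\right|^2=q^d\sum_{y,y'\in A}\ \sum_{\substack{s,s'\in\mathbb{F}_q^*\\ sy=s'y'}}\chi\big((s'-s)\alpha\big).
$$

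The heart of the argument is then a case analysis on the pair $(y,y')$. If $y=y'=0$, the constraint $sy=s'y'$ is vacuous and the inner sum equals $\big|\sum_{s\in\mathbb{F}_q^*}\chi(s\alpha)\big|^2=1$, using $\alpha\neq0$. If exactly one of $y,y'$ is zero, or if $y$ and $y'$ are nonzero but not scalar multiples of each other, then $sy=s'y'$ has no solution with $s,s'\neq0$ and the inner sum is empty. If $y,y'$ are nonzero with $y'=cy$ for the unique $c\in\mathbb{F}_q^*$, then $sy=s'y'$ forces $s=cs'$, and the inner sum becomes $\sum_{s'\in\mathbb{F}_q^*}\chi\big((1-c)s'\alpha\big)$, which equals $q-1$ if $c=1$ and $-1$ if $c\neq1$. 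Grouping by $y$: for a fixed $y\in A\setminus\{0\}$, if $m_y\geq1$ denotes the number of elements of $A\setminus\{0\}$ parallel to $y$ (including $y$ itself), then $y$ contributes $(q-1)-(m_y-1)=q-m_y\leq q-1$. Summing over $y$ and adding the lone $y=y'=0$ term (present only when $0\in A$) gives $\sum_{x}\left|S_{A,\alpha}(x)\right|^2\leq q^d\big(1+(q-1)|A|\big)\leq q^{d+1}|A|$, where we may assume $A\neq\emptyset$ so that $1\leq|A|$. Feeding this into the Cauchy--Schwarz bound and taking square roots yields exactly $\left|\sum_{x\in B}S_{A,\alpha}(x)\right|\leq(|A||B|)^{1/2}q^{(d+1)/2}$.

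I do not anticipate a serious obstacle; the only step requiring care is the bookkeeping in the second moment, namely isolating the ``parallel'' pairs $(y,cy)$ that produce the main term $(q-1)|A|$ from the non-parallel pairs that contribute nothing, and checking that the zero vector only ever adds a harmless $+1$. Everything else is pure orthogonality together with the trivial estimate $m_y\geq1$.
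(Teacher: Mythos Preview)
Your proof is correct and follows essentially the same route as the paper: Cauchy--Schwarz in $x$, expansion of $|S_{A,\alpha}(x)|^2$, orthogonality in $x$ to reduce to the constraint $sy=s'y'$, and then the bound $\sum_{s,s'\in\mathbb{F}_q^*}\sum_{sy=s'y'}\chi((s'-s)\alpha)\leq q|A|$. The only difference is that the paper imports this last inequality from \cite{MR2775806}, whereas you supply the explicit case analysis on parallel pairs $(y,cy)$, making your argument self-contained.
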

\begin{proof}
Given $A, B\subseteq\mathbb{F}_q^d$, we denote $\sum_{x\in B}S_{A, \alpha}(x)$ by $v(\alpha)$ for short. Using Cauchy-Schwarz inequality, we have
\begin{equation}
  \begin{split}
   |v(\alpha)|^2 & =\left|\sum_{x\in B}\sum_{s\in\mathbb{F}_q^*}\sum_{y\in A}\chi(s(x\cdot y-\alpha))\right|^2\\
      & \leq\left|\sum_{x\in B}1^2\right|\left(\sum_{x\in B}\left|\sum_{s\in\mathbb{F}_q^*}\sum_{y\in A}\chi(s(x\cdot y-\alpha))\right|^2\right)\\
      & \leq\left|\sum_{x\in B}1^2\right|\left(\sum_{x\in \mathbb{F}_q^d}\left|\sum_{s\in\mathbb{F}_q^*}\sum_{y\in A}\chi(s(x\cdot y-\alpha))\right|^2\right)\\
      &=|B|\sum_{x\in \mathbb{F}_q^d}\sum_{s, s'\in\mathbb{F}_q^*}\sum_{y, y'\in A}\chi(s(x\cdot y-\alpha))\chi(s'(\alpha-x\cdot y'))\\
      &=|B|\sum_{x\in \mathbb{F}_q^d}\sum_{s, s'\in\mathbb{F}_q^*}\sum_{y, y'\in A}\chi(x\cdot(sy-s'y'))\chi(\alpha(s'-s)).
  \end{split}
\end{equation}
Note that
$$
\sum_{x\in \mathbb{F}_q^d}\chi(x\cdot(sy-s'y'))=
\begin{cases}
0, & \text{if }sy-s'y'\neq0;\\
q^d,& \text{if }sy-s'y'=0.
\end{cases}
$$
So
\begin{equation}\label{11}
  \begin{split}
   |v(\alpha)|^2 &\leq|B|\sum_{x\in \mathbb{F}_q^d}\sum_{s, s'\in\mathbb{F}_q^*}\sum_{y, y'\in A}\chi(x\cdot(sy-s'y'))\chi(\alpha(s'-s))\\
   &=|B|q^d\sum_{s, s'\in\mathbb{F}_q^*}\sum_{\substack{y, y'\in A\\sy=s'y'}}\chi(\alpha(s'-s)).
  \end{split}
\end{equation}
A result from \cite{MR2775806} shows that
\begin{equation}\label{21}
\left|\sum_{s, s'\in\mathbb{F}_q^*}\sum_{\substack{y, y'\in A\\sy=s'y'}}\chi(\alpha(s'-s))\right|\leq|A|q.
\end{equation}
Combining equations (\ref{11}) and (\ref{21}), we conclude that
$$
\left|\sum_{x\in B}S_{A, \alpha}(x)\right|=|v(\alpha)|\leq(|A||B|)^{1/2}q^{(d+1)/2}.
$$
\end{proof}

First of all, we consider the case that $k=2$. In this case, $G$ is a path with two edges. Without loss of generality, assume that $E=\{\{v_1, v_2\}, \{v_2, v_3\}\}$. For $\alpha_1, \alpha_2\in\mathbb{F}_q^*$, define
$$
\Pi_{\alpha_1, \alpha_2, G}(A_1\times A_2\times A_{3})=\{(x_1, x_2, x_3)\in A_1\times A_2\times A_3:\alpha_1=x_1\cdot x_2, \alpha_2=x_2\cdot x_3\}.
$$
Then
\begin{equation}
  \begin{split}
   |\Pi_{\alpha_1, \alpha_2, G}(A_1\times A_2\times A_{3})| & =|\{(x_1, x_2, x_3)\in A_1\times A_2\times A_3:\alpha_1=x_1\cdot x_2, \alpha_2=x_2\cdot x_3\}|\\
      &=q^{-2}\sum_{s, t\in\mathbb{F}_q}\sum_{x_i\in A_i}\chi(s(x_1\cdot x_2-\alpha_1))\chi(t(x_2\cdot x_3-\alpha_2))\\
      &=I+II+III,
  \end{split}
\end{equation}
where $I$ is the case that $s=t=0$, $II$ is the case that exactly one of $s$ and $t$ is nonzero, and $III$ is the case that $s\neq0$ and $t\neq0$.
\begin{equation}
  \begin{split}
    I & = q^{-2}\sum_{s=t=0}\sum_{x_i\in A_i}\chi(s(x_1\cdot x_2-\alpha_1))\chi(t(x_2\cdot x_3-\alpha_2))\\
      & =q^{-2}\sum_{x_i\in A_i}1\\
      &=q^{-2}|A_1||A_2||A_3|.
  \end{split}
\end{equation}
\begin{equation}
  \begin{split}
    II & = q^{-2}\left(\sum_{\substack{s=0\\t\neq0}}\sum_{x_i\in A_i}\chi(s(x_1\cdot x_2-\alpha_1))\chi(t(x_2\cdot x_3-\alpha_2))+\sum_{\substack{s\neq0\\t=0}}\sum_{x_i\in A_i}\chi(s(x_1\cdot x_2-\alpha_1))\chi(t(x_2\cdot x_3-\alpha_2))\right)\\
      & =q^{-2}\left(\sum_{t\neq0}\sum_{x_i\in A_i}\chi(t(x_2\cdot x_3-\alpha_2))+\sum_{s\neq0}\sum_{x_i\in A_i}\chi(s(x_1\cdot x_2-\alpha_1))\right)\\
      &=q^{-2}\left(|A_1|\sum_{t\neq0}\sum_{\substack{x_2\in A_2\\x_3\in A_3}}\chi(t(x_2\cdot x_3-\alpha_2))+|A_3|\sum_{s\neq0}\sum_{\substack{x_1\in A_1\\x_2\in A_2}}\chi(s(\alpha_1-x_2\cdot x_1))\right)\\
      &=q^{-2}\left(|A_1|\sum_{x\in A_2}S_{A_3, \alpha_2}(x)+|A_3|\sum_{x\in A_2}S_{A_1, \alpha_1}(x)\right).
  \end{split}
\end{equation}
Applying Proposition \ref{main}, we have
\begin{equation}
  \begin{split}
    |II| & \leq q^{-2}\left(|A_1|\left|\sum_{x\in A_2}S_{A_3, \alpha_2}(x)\right|+|A_3|\left|\sum_{x\in A_2}S_{A_1, \alpha_1}(x)\right|\right) \\
      & \leq|A_1|(|A_2||A_3|)^{1/2}q^{(d-3)/2}+|A_3|(|A_2||A_1|)^{1/2}q^{(d-3)/2}.
  \end{split}
\end{equation}
\begin{equation}
  \begin{split}
    |III| & = q^{-2}\left|\sum_{\substack{s\neq0\\t\neq0}}\sum_{x_i\in A_i}\chi(s(x_1\cdot x_2-\alpha_1))\chi(t(x_2\cdot x_3-\alpha_2))\right|\\
      & \leq q^{-2}\sum_{x_2\in A_2}\left|\sum_{s\neq0}\sum_{x_1\in A_1}\chi(s(x_1\cdot x_2-\alpha_1))\right|\left|\sum_{t\neq0}\sum_{x_3\in A_3}\chi(t(x_2\cdot x_3-\alpha_2))\right|\\
      &\leq q^{-2}\sum_{x_2\in \mathbb{F}_q^d}\left|\sum_{s\neq0}\sum_{x_1\in A_1}\chi(s(x_1\cdot x_2-\alpha_1))\right|\left|\sum_{t\neq0}\sum_{x_3\in A_3}\chi(t(x_2\cdot x_3-\alpha_2))\right|\\
      &\leq q^{-2}\left(\sum_{x_2\in \mathbb{F}_q^d}\left|\sum_{s\neq0}\sum_{x_1\in A_1}\chi(s(x_1\cdot x_2-\alpha_1))\right|^2\right)^{1/2}\left(\sum_{x_2\in \mathbb{F}_q^d}\left|\sum_{t\neq0}\sum_{x_3\in A_3}\chi(t(x_2\cdot x_3-\alpha_2))\right|^2\right)^{1/2}.
  \end{split}
\end{equation}
In the proof of Proposition \ref{main}, we know that
$$
\left(\sum_{x_2\in \mathbb{F}_q^d}\left|\sum_{s\neq0}\sum_{x_1\in A}\chi(s(x_1\cdot x_2-\alpha_1))\right|^2\right)^{1/2}\leq|A|^{1/2}q^{(d+1)/2}.
$$
Thus
\begin{equation}
  \begin{split}
    |III|  &\leq q^{-2}\left(\sum_{x_2\in \mathbb{F}_q^d}\left|\sum_{s\neq0}\sum_{x_1\in A_1}\chi(s(x_1\cdot x_2-\alpha_1))\right|^2\right)^{1/2}\left(\sum_{x_2\in \mathbb{F}_q^d}\left|\sum_{t\neq0}\sum_{x_3\in A_3}\chi(t(x_2\cdot x_3-\alpha_2))\right|^2\right)^{1/2}\\
    &\leq q^{d-1}|A_1|^{1/2}|A_3|^{1/2}.
  \end{split}
\end{equation}
We can choose $C=100$. If $|A_1||A_2|\geq100q^{d+1}$ and $|A_2||A_3|\geq100q^{d+1}$, then
$$
\left\{
\begin{array}{c}
  q^{-2}|A_1||A_2||A_3|\geq4|A_1|(|A_2||A_3|)^{1/2}q^{(d-3)/2},  \\
  q^{-2}|A_1||A_2||A_3|\geq4|A_3|(|A_2||A_1|)^{1/2}q^{(d-3)/2}, \\
  q^{-2}|A_1||A_2||A_3|\geq4(|A_1||A_3|)^{1/2}q^{d-1}.
\end{array}
\right.
$$
So
$$
|\Pi_{\alpha_1, \alpha_2, G}(A_1\times A_2\times A_{3})|=I+II+III\geq I-|II|-|III|\geq\frac{1}{4}q^{-2}|A_1||A_2||A_3|>0.
$$
Since $|\Pi_{\alpha_1, \alpha_2, G}(A_1\times A_2\times A_{3})|$ is always a nonnegative integer, it follows that $|\Pi_{\alpha_1, \alpha_2, G}(A_1\times A_2\times A_{3})|\geq1$, and hence $(\alpha_1, \alpha_2)\in\Pi_G(A_1\times A_2\times A_{3})$. Thus $\Pi_G(A_1\times A_2\times A_{3})\supseteq(\mathbb{F}_q^*)^2$.
\\

Next we consider the case that $k\geq3$. For a function $\alpha:E\rightarrow\mathbb{F}_q^*$, we write $\alpha_{\{i, j\}}=\alpha(\{v_i, v_j\})$ for convenience and define
$$
\begin{array}{lll}
&\Pi_{\vec{\alpha}, G}(A_1\times A_2\times \cdots\times A_{k+1})\\
=&\{(x_1, x_2,\ldots, x_{k+1})\in A_1\times A_2\times\cdots\times A_{k+1}:\alpha_{\{i, j\}}=x_i\cdot x_{j}, \text{ for every }\{v_i, v_j\}\in E\}.
\end{array}
$$
Then
\begin{equation}
  \begin{split}
   &|\Pi_{\vec{\alpha}, G}(A_1\times A_2\times \cdots\times A_{k+1})| \\
    =&|\{(x_1, x_2,\ldots, x_{k+1})\in A_1\times A_2\times\cdots\times A_{k+1}:\alpha_{\{i, j\}}=x_i\cdot x_{j}, \text{ for every }\{v_i, v_j\}\in E\}|\\
      =&q^{-k}\sum_{\vec{s}\in\mathbb{F}_q^k}\sum_{x_i\in A_i}\prod_{\{v_i,v_j\}\in E}\chi(s_{\{i, j\}}(x_i\cdot x_{j}-\alpha_{\{i, j\}}))\\
      =&R_0+R_1+\cdots+R_k,
  \end{split}
\end{equation}
where $\vec{s}\in\mathbb{F}_q^k$ indexed by elements in $E$, i.e. $s_{\{i, j\}}\in\mathbb{F}_q$ is the $\{v_i, v_j\}$'th coordinate of $\vec{s}$, and  $R_m(0\leq m\leq k)$ is the case that exactly $m$ coordinates of $\vec{s}$ are nonzero.
\begin{equation}
  \begin{split}
    R_0 & = q^{-k}\sum_{\vec{s}=0}\sum_{x_i\in A_i}\prod_{\{v_i, v_j\}\in E}\chi(s_{\{i, j\}}(x_i\cdot x_{j}-\alpha_{\{i, j\}}))\\
      & =q^{-k}\sum_{x_i\in A_i}1\\
      &=q^{-k}\prod_{i=1}^{k+1}|A_i|.
  \end{split}
\end{equation}
\begin{equation}
  \begin{split}
    R_1 & = q^{-k}\sum_{\{u, v\}\in E}\sum_{\substack{\vec{s}\in\mathbb{F}_q^k\\\text{only }s_{\{u, v\}}\text{ is nonzero}}}\sum_{x_i\in A_i}\prod_{\{v_i, v_j\}\in E}\chi(s_{\{i, j\}}(x_i\cdot x_{j}-\alpha_{\{i, j\}}))\\
      & =q^{-k}\sum_{\{u, v\}\in E}\sum_{s_{\{u, v\}}\in\mathbb{F}_q^*}\sum_{x_i\in A_i}\chi(s_{\{u, v\}}(x_{u}\cdot x_{v}-\alpha_{\{u, v\}}))\\
      & =q^{-k}\sum_{\{u, v\}\in E}\prod_{\substack{1\leq i\leq k+1\\i\neq u, v}}|A_i|\left(\sum_{s_{\{u, v\}}\in\mathbb{F}_q^*}\sum_{\substack{x_u\in A_u\\x_{v}\in A_{v}}}\chi(s_{\{u, v\}}(x_u\cdot x_{v}-\alpha_{\{u, v\}}))\right)\\
      &=q^{-k}\sum_{\{u, v\}\in E}\left(\prod_{\substack{1\leq i\leq k+1\\i\neq u, v}}|A_i|\right)\sum_{x\in A_u}S_{A_{v}, \alpha_{\{u, v\}}}(x).
  \end{split}
\end{equation}
Applying Proposition \ref{main}, we have
\begin{equation}
  \begin{split}
    |R_1| &\leq q^{-k}\sum_{\{u, v\}\in E}\left(\prod_{\substack{1\leq i\leq k+1\\i\neq u, v}}|A_i|\right)\left|\sum_{x\in A_u}S_{A_{v}, \alpha_{\{u, v\}}}(x)\right|\\
    &\leq q^{-k}\sum_{\{u, v\}\in E}\left(\prod_{\substack{1\leq i\leq k+1\\i\neq u, v}}|A_i|\right)(|A_u||A_{v}|)^{1/2}q^{(d+1)/2}\\
    &=q^{\frac{d+1}{2}-k}\sum_{\{u, v\}\in E}(|A_u||A_{v}|)^{1/2}\left(\prod_{\substack{1\leq i\leq k+1\\i\neq u, v}}|A_i|\right).
  \end{split}
\end{equation}

\begin{equation}
  \begin{split}
    &R_2 \\
    =&  q^{-k}\sum_{\substack{\{u_1, w_1\}, \{u_2, w_2\}\in E\\\{u_1, w_1\}\text{ and }\{u_2, w_2\}\text{ are different}}}\sum_{\substack{\vec{s}\in\mathbb{F}_q^k\\\text{only }s_{\{u_1, w_1\}}\text{ and }s_{\{u_2, w_2\}}\text{ are nonzero}}}\sum_{x_i\in A_i}\prod_{\{v_i, v_j\}\in E}\chi(s_{\{i, j\}}(x_i\cdot x_{j}-\alpha_{\{i, j\}}))\\
      =& q^{-k}\sum_{\substack{\{u_1, w_1\}, \{u_2, w_2\}\in E\\\{u_1, w_1\}\text{ and }\{u_2, w_2\}\text{ are different}\\\{u_1, w_1\}\text{ and }\{u_2, w_2\}\text{ have a common vertex}}}\sum_{\substack{\vec{s}\in\mathbb{F}_q^k\\\text{only }s_{\{u_1, w_1\}}\text{ and }s_{\{u_2, w_2\}}\text{ are nonzero}}}\sum_{x_i\in A_i}\prod_{\{v_i, v_j\}\in E}\chi(s_{\{i, j\}}(x_i\cdot x_{j}-\alpha_{\{i, j\}}))\\
      &+q^{-k}\sum_{\substack{\{u_1, w_1\}, \{u_2, w_2\}\in E\\\{u_1, w_1\}\text{ and }\{u_2, w_2\}\text{ are disjoint}}}\sum_{\substack{\vec{s}\in\mathbb{F}_q^k\\\text{only }s_{\{u_1, w_1\}}\text{ and }s_{\{u_2, w_2\}}\text{ are nonzero}}}\sum_{x_i\in A_i}\prod_{\{v_i, v_j\}\in E}\chi(s_{\{i, j\}}(x_i\cdot x_{j}-\alpha_{\{i, j\}})).
  \end{split}
\end{equation}
Given two edges $\{u_1, w_1\}, \{u_2, w_2\}\in E$ with a common vertex, without loss of generality assume that $u_1=u_2$. For simplicity, we write $u_1=u_2=u, s_{\{u_1, w_1\}}=s_1, s_{\{u_2, w_2\}}=s_2, \alpha_{\{u, w_1\}}=\alpha_1, \alpha_{\{u, w_2\}}=\alpha_2$. Then we have
\begin{equation}
  \begin{split}
    &  \sum_{\substack{\vec{s}\in\mathbb{F}_q^k\\\text{only }s_{\{u, w_1\}}\text{ and }s_{\{u, w_2\}}\text{ are nonzero}}}\sum_{x_i\in A_i}\prod_{\{v_i, v_j\}\in E}\chi(s_{\{i, j\}}(x_i\cdot x_{j}-\alpha_{\{i, j\}}))\\
      =&\sum_{s_{\{u, w_1\}}, s_{\{u, w_2\}}\in\mathbb{F}_q^*}\sum_{x_i\in A_i}\chi(s_{\{u, w_1\}}(x_{u}\cdot x_{w_1}-\alpha_{\{u, w_1\}}))\chi(s_{\{u, w_2\}}(x_{u}\cdot x_{w_2}-\alpha_{\{u, w_2\}}))\\
            =&\sum_{s_{1}, s_{2}\in\mathbb{F}_q^*}\sum_{x_i\in A_i}\chi(s_{1}(x_{u}\cdot x_{w_1}-\alpha_{1}))\chi(s_{2}(x_{u}\cdot x_{w_2}-\alpha_{2}))\\
      =&\left(\prod_{\substack{1\leq i\leq k+1\\i\neq u, w_1, w_2}}|A_i|\right)\left(\sum_{s_{1}, s_2\in\mathbb{F}_q^*}\sum_{\substack{x_{u}\in A_{u}\\x_{w_1}\in A_{w_1}\\x_{w_2}\in A_{w_2}}}\chi(s_{1}(x_{u}\cdot x_{w_1}-\alpha_{1}))\chi(s_{2}(x_{u}\cdot x_{w_2}-\alpha_{2}))\right).
  \end{split}
\end{equation}
\begin{equation}
\begin{split}
&\left|\sum_{s_{1}, s_2\in\mathbb{F}_q^*}\sum_{\substack{x_{u}\in A_{u}\\x_{w_1}\in A_{w_1}\\x_{w_2}\in A_{w_2}}}\chi(s_{1}(x_{u}\cdot x_{w_1}-\alpha_{1}))\chi(s_{2}(x_{u}\cdot x_{w_2}-\alpha_{2}))\right|\\
\leq&\sum_{x_{u}\in A_{u}}\left(\left|\sum_{s_{1}\in\mathbb{F}_q^*}\sum_{x_{w_1}\in A_{w_1}}\chi(s_{1}(x_{u}\cdot x_{w_1}-\alpha_{1}))\right|\left|\sum_{s_{2}\in\mathbb{F}_q^*}\sum_{x_{w_2}\in A_{w_2}}\chi(s_{2}(x_{u}\cdot x_{w_2}-\alpha_{2}))\right|\right)\\
\leq&\sum_{x_{u}\in \mathbb{F}_q^d}\left(\left|\sum_{s_{1}\in\mathbb{F}_q^*}\sum_{x_{w_1}\in A_{w_1}}\chi(s_{1}(x_{u}\cdot x_{w_1}-\alpha_{1}))\right|\left|\sum_{s_{2}\in\mathbb{F}_q^*}\sum_{x_{w_2}\in A_{w_2}}\chi(s_{2}(x_{u}\cdot x_{w_2}-\alpha_{2}))\right|\right)\\
\leq&\left(\sum_{x_{u}\in \mathbb{F}_q^d}\left|\sum_{s_{1}\in\mathbb{F}_q^*}\sum_{x_{w_1}\in A_{w_1}}\chi(s_{1}(x_u\cdot x_{w_1}-\alpha_{1}))\right|^2\right)^{\frac12}\left(\sum_{x_{u}\in \mathbb{F}_q^d}\left|\sum_{s_{2}\in\mathbb{F}_q^*}\sum_{x_{w_2}\in A_{w_2}}\chi(s_{2}(x_{u}\cdot x_{w_2}-\alpha_{2}))\right|^2\right)^{\frac12}.
\end{split}
\end{equation}
Similar to inequality (\ref{21}), we have
$$
\sum_{x_{u}\in \mathbb{F}_q^d}\left|\sum_{s_{1}\in\mathbb{F}_q^*}\sum_{x_{w_1}\in A_{w_1}}\chi(s_{1}(x_u\cdot x_{w_1}-\alpha_{1}))\right|^2\leq q^{d+1}|A_{w_1}|,
$$
and
$$
\sum_{x_{u}\in \mathbb{F}_q^d}\left|\sum_{s_{2}\in\mathbb{F}_q^*}\sum_{x_{w_2}\in A_{w_2}}\chi(s_{2}(x_{u}\cdot x_{w_2}-\alpha_{2}))\right|^2\leq q^{d+1}|A_{w_2}|.
$$
So we have
\begin{equation}
\begin{split}
&\left|\sum_{s_{1}, s_2\in\mathbb{F}_q^*}\sum_{\substack{x_{u}\in A_{u}\\x_{w_1}\in A_{w_1}\\x_{w_2}\in A_{w_2}}}\chi(s_{1}(x_{u}\cdot x_{w_1}-\alpha_{1}))\chi(s_{2}(x_{u}\cdot x_{w_2}-\alpha_{2}))\right|\\
\leq&\left(\sum_{x_{u}\in \mathbb{F}_q^d}\left|\sum_{s_{1}\in\mathbb{F}_q^*}\sum_{x_{w_1}\in A_{w_1}}\chi(s_{1}(x_u\cdot x_{w_1}-\alpha_{1}))\right|^2\right)^{\frac12}\left(\sum_{x_{u}\in \mathbb{F}_q^d}\left|\sum_{s_{2}\in\mathbb{F}_q^*}\sum_{x_{w_2}\in A_{w_2}}\chi(s_{2}(x_{u}\cdot x_{w_2}-\alpha_{2}))\right|^2\right)^{\frac12}\\
\leq&q^{d+1}(|A_{w_1}||A_{w_2}|)^{\frac12}.
\end{split}
\end{equation}

Given two disjoint edges $\{u_1, w_1\}, \{u_2, w_2\}\in E$, for simplicity, we write $s_{\{u_1, w_1\}}=s_1, s_{\{u_2, w_2\}}=s_2, \alpha_{\{u_1, w_1\}}=\alpha_1, \alpha_{\{u_2, w_2\}}=\alpha_2$. Then we have
\begin{equation}
  \begin{split}
 &  \sum_{\substack{\vec{s}\in\mathbb{F}_q^k\\\text{only }s_{\{u_1, w_1\}}\text{ and }s_{\{u_2, w_2\}}\text{ are nonzero}}}\sum_{x_i\in A_i}\prod_{\{v_i, v_j\}\in E}\chi(s_{\{i, j\}}(x_i\cdot x_{j}-\alpha_{\{i, j\}}))\\
      =&\sum_{s_{\{u_1, w_1\}}, s_{\{u_2, w_2\}}\in\mathbb{F}_q^*}\sum_{x_i\in A_i}\chi(s_{\{u_1, w_1\}}(x_{u_1}\cdot x_{w_1}-\alpha_{\{u_1, w_1\}}))\chi(s_{\{u_2, w_2\}}(x_{u_2}\cdot x_{w_2}-\alpha_{\{u_2, w_2\}}))\\
            =&\sum_{s_{1}, s_{2}\in\mathbb{F}_q^*}\sum_{x_i\in A_i}\chi(s_{1}(x_{u_1}\cdot x_{w_1}-\alpha_{1}))\chi(s_{2}(x_{u_2}\cdot x_{w_2}-\alpha_{2}))\\
      =&\left(\prod_{\substack{1\leq i\leq k+1\\i\neq u_1, w_1, u_2, w_2}}|A_i|\right)\left(\sum_{s_{1}\in\mathbb{F}_q^*}\sum_{\substack{x_{u_1}\in A_{u_1}\\x_{w_1}\in A_{w_1}}}\chi(s_{1}(x_{u_1}\cdot x_{w_1}-\alpha_{1}))\right)\left(\sum_{s_{2}\in\mathbb{F}_q^*}\sum_{\substack{x_{u_2}\in A_{u_2}\\x_{w_2}\in A_{w_2}}}\chi(s_{2}(x_{u_2}\cdot x_{w_2}-\alpha_{2}))\right)\\
      =&\left(\prod_{\substack{1\leq i\leq k+1\\i\neq u_1, w_1, u_2, w_2}}|A_i|\right)\left(\sum_{x\in A_{u_1}}S_{A_{w_1}, \alpha_{1}}(x)\right)\left(\sum_{x\in A_{u_2}}S_{A_{w_2}, \alpha_{2}}(x)\right).
  \end{split}
\end{equation}
Applying Proposition \ref{main}, we have
\begin{equation}
  \begin{split}
&\left|\sum_{\substack{\vec{s}\in\mathbb{F}_q^k\\\text{only }s_{\{u_1, w_1\}}\text{ and }s_{\{u_2, w_2\}}\text{ are nonzero}}}\sum_{x_i\in A_i}\prod_{\{v_i, v_j\}\in E}\chi(s_{\{i, j\}}(x_i\cdot x_{j}-\alpha_{\{i, j\}}))\right|\\
=&\left|\prod_{\substack{1\leq i\leq k+1\\i\neq u_1, w_1, u_2, w_2}}|A_i|\right|\left|\sum_{x\in A_{u_1}}S_{A_{w_1}, \alpha_{1}}(x)\right|\left|\sum_{x\in A_{u_2}}S_{A_{w_2}, \alpha_{2}}(x)\right|\\
    \leq&q^{d+1}\left(\prod_{\substack{1\leq i\leq k+1\\i\neq u_1, w_1, u_2, w_2}}|A_i|\right)(|A_{u_1}||A_{w_1}||A_{u_2}||A_{w_2}|)^{1/2}.
  \end{split}
\end{equation}
Therefore,
\begin{equation}
  \begin{split}
      &|R_2| \\
  \leq&  q^{-k}\left|\sum_{\substack{\{u_1, w_1\}, \{u_2, w_2\}\in E\\\{u_1, w_1\}\text{ and }\{u_2, w_2\}\text{ are different}\\\{u_1, w_1\}\text{ and }\{u_2, w_2\}\text{ have a common vertex}}}\sum_{\substack{\vec{s}\in\mathbb{F}_q^k\\\text{only }s_{\{u_1, w_1\}}\text{ and }s_{\{u_2, w_2\}}\text{ are nonzero}}}\sum_{x_i\in A_i}\prod_{\{v_i, v_j\}\in E}\chi(s_{\{i, j\}}(x_i\cdot x_{j}-\alpha_{\{i, j\}}))\right|\\
      &+q^{-k}\left|\sum_{\substack{\{u_1, w_1\}, \{u_2, w_2\}\in E\\\{u_1, w_1\}\text{ and }\{u_2, w_2\}\text{ are disjoint}}}\sum_{\substack{\vec{s}\in\mathbb{F}_q^k\\\text{only }s_{\{u_1, w_1\}}\text{ and }s_{\{u_2, w_2\}}\text{ are nonzero}}}\sum_{x_i\in A_i}\prod_{\{v_i, v_j\}\in E}\chi(s_{\{i, j\}}(x_i\cdot x_{j}-\alpha_{\{i, j\}}))\right|\\
  \leq&q^{d+1-k}\sum_{\substack{\{u, w_1\}, \{u, w_2\}\in E\\w_1\neq w_2}}\left(\prod_{\substack{1\leq i\leq k+1\\i\neq u, w_1, w_2}}|A_i|\right)(|A_{w_1}||A_{w_2}|)^{\frac12}\\
      &+q^{d+1-k}\sum_{\substack{\{u_1, w_1\}, \{u_2, w_2\}\in E\\\{u_1, w_1\}\text{ and }\{u_2, w_2\}\text{ are disjoint}}}\left(\prod_{\substack{1\leq i\leq k+1\\i\neq u_1, w_1, u_2, w_2}}|A_i|\right)(|A_{u_1}||A_{w_1}||A_{u_2}||A_{w_2}|)^{1/2}.
  \end{split}
\end{equation}

Next we consider $R_m$ for $m\geq3$. We first choose $m$ edges from $E$, $\{u_1, w_1\}, \{u_2, w_2\}, \ldots, \{u_m, w_m\}$, and consider
$$
q^{-k}\sum_{\substack{\vec{s}\in\mathbb{F}_q^k\\\text{exactly }s_{\{u_i, w_i\}}(1\leq i\leq m)\text{ are nonzero}}}\sum_{x_i\in A_i}\prod_{\{v_i, v_j\}\in E}\chi(s_{\{i, j\}}(x_i\cdot x_{j}-\alpha_{\{i, j\}})).
$$
For convenience, let $S=\{u_1, u_2, \ldots, u_m, w_1, w_2, \ldots, w_m\}$ (some of these vertices may be the same, so $|S|\leq 2m$), $s_1=s_{\{u_1, w_1\}}, s_2=s_{\{u_2, w_2\}}, \ldots, s_m=s_{\{u_m, w_m\}}$, and $\alpha_1=\alpha_{\{u_1, w_1\}}, \alpha_2=\alpha_{\{u_2, w_2\}}, \ldots, \alpha_m=\alpha_{\{u_m, w_m\}}$. Roughly speaking, the vertices in $S$ are those we are concerned with. Now
\begin{equation}
  \begin{split}
   &q^{-k}\sum_{\substack{\vec{s}\in\mathbb{F}_q^k\\\text{exactly }s_{\{u_i, w_i\}}(1\leq i\leq m)\text{ are nonzero}}}\sum_{x_i\in A_i}\prod_{\{v_i, v_j\}\in E}\chi(s_{\{i, j\}}(x_i\cdot x_{j}-\alpha_{\{i, j\}}))\\
  =&q^{-k}\sum_{s_1, s_2, \ldots, s_m\in\mathbb{F}_q^*}\sum_{x_i\in A_i}\prod_{\{v_i, v_j\}\in E}\chi(s_{\{i, j\}}(x_i\cdot x_{j}-\alpha_{\{i, j\}}))\\
  =&q^{-k}\sum_{s_1, s_2, \ldots, s_m\in\mathbb{F}_q^*}\sum_{x_i\in A_i}\prod_{\ell=1}^m\chi(s_{\ell}(x_{u_\ell}\cdot x_{w_\ell}-\alpha_{\ell})).
  \end{split}
\end{equation}

For a vertex $u_i$ (or $w_i$) in $S$, we say $u_i$ (or $w_i$) is unique if $u_i\notin\{u_j, w_j\}$ whenever $j\neq i$. If a vertex $w_i$ is unique, then we exchange $u_i$ and $w_i$ so that $u_i$ is unique. So we only consider unique vertices of $u_i, 1\leq i\leq m$. Note that $G$ is a tree. There are at least $m$ edges in $G[S]$. So $|S|\geq m+1$. Hence there are at least two unique vertices in $S$, say $u_1$ and $u_m$. Moreover, assume that $w_1$ and $w_m$ are distinct.

We need the following lemma.
\begin{lemma}\label{quadratic}
Let $m$ and $n$ be positive integers. Then for each double sequence $\{c_{jk}:1\leq j\leq m, 1\leq k\leq n\}$ and pair of sequence $\{z_j:1\leq j\leq m\}$ and $\{y_k:1\leq k\leq n\}$ of complex numbers, we have the bound
$$
\left|\sum_{j=1}^m\sum_{k=1}^nc_{jk}z_jy_k\right|\leq\sqrt{RC}\left(\sum_{j=1}^m|z_j|^2\right)^{\frac12}\left(\sum_{k=1}^n|y_k|^2\right)^{\frac12},
$$
where $R$ and $C$ are respectively the row and column sum maxima defined by
$$
R=\max_j\sum_{k=1}^n|c_{jk}|\text{ and }C=\max_k\sum_{j=1}^n|c_{jk}|.
$$
\end{lemma}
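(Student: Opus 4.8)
The plan is to prove this via a weighted Cauchy--Schwarz inequality, a standard device often attributed to Schur. First I would pass to absolute values using the triangle inequality, so that it suffices to bound $\sum_{j=1}^m\sum_{k=1}^n|c_{jk}|\,|z_j|\,|y_k|$. The one genuinely clever step is to split the weight $|c_{jk}|$ symmetrically as $|c_{jk}|^{1/2}\cdot|c_{jk}|^{1/2}$ and regroup, writing each summand as $\bigl(|c_{jk}|^{1/2}|z_j|\bigr)\bigl(|c_{jk}|^{1/2}|y_k|\bigr)$; this is precisely what forces the two row/column maxima to appear under a single square root in the final bound.

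Next I would apply Cauchy--Schwarz to the double sum over the index set $\{1,\dots,m\}\times\{1,\dots,n\}$, viewing $|c_{jk}|^{1/2}|z_j|$ and $|c_{jk}|^{1/2}|y_k|$ as the two vectors, to obtain
$$
\sum_{j,k}|c_{jk}|\,|z_j|\,|y_k|\leq\left(\sum_{j,k}|c_{jk}|\,|z_j|^2\right)^{1/2}\left(\sum_{j,k}|c_{jk}|\,|y_k|^2\right)^{1/2}.
$$
I would then evaluate each factor by interchanging the order of summation. In the first factor, summing over $k$ first gives $\sum_{j}|z_j|^2\bigl(\sum_{k}|c_{jk}|\bigr)\leq R\sum_{j}|z_j|^2$ by the definition of $R$; symmetrically, summing over $j$ first in the second factor gives a bound of $C\sum_{k}|y_k|^2$ by the definition of $C$. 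Multiplying the two estimates and taking the square root yields exactly $\sqrt{RC}\,\bigl(\sum_{j}|z_j|^2\bigr)^{1/2}\bigl(\sum_{k}|y_k|^2\bigr)^{1/2}$, which is the claimed inequality.

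I do not anticipate any real obstacle: every step is elementary, and the only subtlety is recognizing the symmetric factorization of $|c_{jk}|$ that makes Cauchy--Schwarz produce the $\sqrt{RC}$ constant rather than, say, $R$ or $\sqrt{mn}\max|c_{jk}|$. The degenerate cases (some $c_{jk}=0$, or $R=0$ or $C=0$) need no separate treatment, since the identity $|c_{jk}|^{1/2}|c_{jk}|^{1/2}=|c_{jk}|$ holds regardless and all sums involved are finite.
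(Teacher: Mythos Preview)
Your argument is correct: this is exactly the standard Schur test, and the splitting $|c_{jk}|=|c_{jk}|^{1/2}|c_{jk}|^{1/2}$ followed by Cauchy--Schwarz over the product index set is the canonical way to obtain the $\sqrt{RC}$ constant. Each step is valid as written.

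As for comparison with the paper: the paper does not actually prove this lemma. It is stated as a tool (the classical Schur bound for bilinear forms) and then immediately applied, with no argument supplied. So there is nothing to compare against; your proposal fills in the omitted justification, and does so by the expected route.
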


We can write
$$
q^{-k}\sum_{s_1, s_2, \ldots, s_m\in\mathbb{F}_q^*}\sum_{x_i\in A_i}\prod_{\ell=1}^m\chi(s_{\ell}(x_{u_\ell}\cdot x_{w_\ell}-\alpha_{\ell}))=\sum_{x_{w_1}\in A_{w_1}}\sum_{x_{w_m}\in A_{w_m}}c_{x_{w_1}, x_{w_m}}z_{x_{w_1}}y_{x_{w_m}},
$$
where
$$
z_{x_{w_1}}=S_{A_{u_1}, \alpha_{1}}(x_{w_1}), \quad y_{x_{w_m}}=S_{A_{u_m}, \alpha_{m}}(x_{w_m}),
$$
and
$$c_{x_{w_1}, x_{w_m}}=q^{-k}\sum_{s_{2}, s_{3}, \ldots, s_{{m-1}}\in\mathbb{F}_q^*}\sum_{\substack{x_i\in A_i\\i\neq u_1, w_1, u_m, w_m}}\prod_{\ell=2}^{m-1}\chi(s_{\ell}(x_{u_\ell}\cdot x_{w_\ell}-\alpha_{\ell})).
$$
Note that
$$
R=\max_{x_{w_1}}\sum_{x_{w_m}\in A_{w_m}}|c_{x_{w_1}, x_{w_m}}|\leq q^{-k}\sum_{x_{w_m}\in A_{w_m}}\sum_{s_{2}, s_{3}, \ldots, s_{{m-1}}\in\mathbb{F}_q^*}\sum_{\substack{x_i\in A_i\\i\neq u_1, w_1, u_m, w_m}}1\leq q^{m-2-k}\prod_{\substack{1\leq i\leq k+1\\i\neq u_1, w_1, u_m}}|A_i|,
$$
and
$$
C=\max_{x_{w_m}}\sum_{x_{w_1}\in A_{w_1}}|c_{x_{w_1}, x_{w_m}}|\leq q^{-k}\sum_{x_{w_1}\in A_{w_1}}\sum_{s_{2}, s_{3}, \ldots, s_{{m-1}}\in\mathbb{F}_q^*}\sum_{\substack{x_i\in A_i\\i\neq u_1,w_1, u_m, w_m}}1\leq q^{m-2-k}\prod_{\substack{1\leq i\leq k+1\\i\neq u_1, u_m, w_m}}|A_i|,
$$
and applying Lemma \ref{quadratic} we have,
\begin{equation}
  \begin{split}
 &\left|q^{-k}\sum_{s_1, s_2, \ldots, s_m\in\mathbb{F}_q^*}\sum_{x_i\in A_i}\prod_{\ell=1}^m\chi(s_{\ell}(x_{u_\ell}\cdot x_{w_\ell}-\alpha_{\ell}))\right|\\
=&\left|\sum_{x_{w_1}\in A_{w_1}}\sum_{x_{w_m}\in A_{w_m}}c_{x_{w_1}, x_{w_m}}z_{x_{w_1}}y_{x_{w_m}}\right|\\
\leq&\sqrt{RC}\left(\sum_{x_{w_1}\in A_{w_1}}|z_{x_{w_1}}|^2\right)^{\frac12}\left(\sum_{x_{w_m}\in A_{w_m}}|y_{x_{w_m}}|^2\right)^{\frac12}\\
\leq&\sqrt{\left(q^{m-2-k}\prod_{\substack{1\leq i\leq k+1\\i\neq u_1, w_1, u_m}}|A_i|\right)\left(q^{m-2-k}\prod_{\substack{1\leq i\leq k+1\\i\neq u_1, u_m, w_m}}|A_i|\right)}\times\\
&\left(\sum_{x_{w_1}\in A_{w_1}}|S_{A_{u_1}, \alpha_{1}}(x_{w_1})|^2\right)^{\frac12}\left(\sum_{x_{w_m}\in A_{w_m}}|S_{A_{u_m}, \alpha_{m}}(x_{w_m})|^2\right)^{\frac12}\\
\leq&q^{m-2-k}\left(\prod_{\substack{1\leq i\leq k+1\\i\neq u_1, w_1,u_m, w_m}}|A_i|\right)|A_{v_1}|^{\frac12}|A_{w_m}|^{\frac12}\left(\sum_{x_{w_1}\in\mathbb{F}_q^d }|S_{A_{u_1}, \alpha_{1}}(x_{w_1})|^2\right)^{\frac12}\left(\sum_{x_{w_m}\in \mathbb{F}_q^d}|S_{A_{u_m}, \alpha_{m}}(x_{w_m})|^2\right)^{\frac12}\\
\leq&q^{d+m-1-k}\left(\prod_{\substack{1\leq i\leq k+1\\i\neq u_1, w_1,u_m, w_m}}|A_i|\right)(|A_{u_1}||A_{w_1}||A_{u_m}||A_{w_m}|)^{\frac12}.
  \end{split}
\end{equation}

If for every pair of unique vertices $u_i$ and $u_j$, we have $w_i=w_j$, then $G[S]$ is a star, i.e. $w_1=w_2=\cdots=w_m:=w$, and $S=\{u_1, u_2, \ldots, u_m, w\}$. In this case,
\begin{equation}
  \begin{split}
   &\left|q^{-k}\sum_{s_1, s_2, \ldots, s_m\in\mathbb{F}_q^*}\sum_{x_i\in A_i}\prod_{\ell=1}^m\chi(s_{\ell}(x_{u_\ell}\cdot x_{w_\ell}-\alpha_{\ell}))\right|\\
  =&\left|q^{-k}\sum_{s_1, s_2, \ldots, s_m\in\mathbb{F}_q^*}\sum_{x_i\in A_i}\prod_{\ell=1}^m\chi(s_{\ell}(x_{u_\ell}\cdot x_{w}-\alpha_{\ell}))\right|\\
  =&q^{-k}\left(\prod_{i\in V\backslash S}|A_i|\right)\left|\sum_{x_w\in A_w}\prod_{\ell=1}^m\left(\sum_{s_\ell\in\mathbb{F}_q^*}\sum_{x_{u_\ell}\in A_{u_\ell}}\chi(s_{\ell}(x_{u_\ell}\cdot x_{w}-\alpha_{\ell}))\right)\right|\\
  \leq&q^{-k}\left(\prod_{i\in V\backslash S}|A_i|\right)\sum_{x_w\in A_w}\prod_{\ell=1}^m\left|\left(\sum_{s_\ell\in\mathbb{F}_q^*}\sum_{x_{u_\ell}\in A_{u_\ell}}\chi(s_{\ell}(x_{u_\ell}\cdot x_{w}-\alpha_{\ell}))\right)\right|\\
    \leq&q^{-k}\left(\prod_{i\in V\backslash S}|A_i|\right)\sum_{x_w\in \mathbb{F}_q^d}\prod_{\ell=1}^m\left|\left(\sum_{s_\ell\in\mathbb{F}_q^*}\sum_{x_{u_\ell}\in A_{u_\ell}}\chi(s_{\ell}(x_{u_\ell}\cdot x_{w}-\alpha_{l}))\right)\right|\\
  \leq&q^{-k}\left(\prod_{i\in V\backslash S}|A_i|\right)\prod_{\ell=1}^m\left(\sum_{x_w\in \mathbb{F}_q^d}\left|\sum_{s_\ell\in\mathbb{F}_q^*}\sum_{x_{u_\ell}\in A_{u_\ell}}\chi(s_{\ell}(x_{u_\ell}\cdot x_{w}-\alpha_{\ell}))\right|^2\right)^{1/2},
  \end{split}
\end{equation}
where the last inequality follows from
$$
\sum_{i=1}^n\prod_{j=1}^m|a_{ij}|\leq\prod_{j=1}^m\left(\sum_{i=1}^n|a_{ij}|^2\right)^{1/2},
$$
a generalization of Cauchy-Schwarz inequality. Since
$$
\left(\sum_{x_w\in \mathbb{F}_q^d}\left|\sum_{s_\ell\in\mathbb{F}_q^*}\sum_{x_{u_\ell}\in A_{u_\ell}}\chi(s_{\ell}(x_{u_\ell}\cdot x_{w}-\alpha_{\ell}))\right|^2\right)^{1/2}\leq|A_{u_\ell}|^{1/2}q^{(d+1)/2},
$$
we have
\begin{equation}
  \begin{split}
   &\left|q^{-k}\sum_{s_1, s_2, \ldots, s_m\in\mathbb{F}_q^*}\sum_{x_i\in A_i}\prod_{\ell=1}^m\chi(s_{\ell}(x_{u_\ell}\cdot x_{w_\ell}-\alpha_{\ell}))\right|\\
\leq&q^{-k}\left(\prod_{i\in V\backslash S}|A_i|\right)\prod_{\ell=1}^m\left(\sum_{x_w\in \mathbb{F}_q^d}\left|\sum_{s_\ell\in\mathbb{F}_q^*}\sum_{x_{u_\ell}\in A_{u_\ell}}\chi(s_{\ell}(x_{u_\ell}\cdot x_{w}-\alpha_{\ell}))\right|^2\right)^{1/2},
  \end{split}
\end{equation}

In conclusion, we have
\begin{equation}
  \begin{split}
 &|R_1|+|R_2|+\cdots+|R_k|\\
 \leq &q^{\frac{d+1}{2}-k}\sum_{j=1}^k(|A_j||A_{j+1}|)^{1/2}\left(\prod_{\substack{1\leq i\leq k+1\\i\neq j, j+1}}|A_i|\right)+q^{d+1-k}\sum_{j=1}^{k-1}\left(\prod_{\substack{1\leq i\leq k+1\\i\neq j, j+1, j+2}}|A_i|\right)(|A_j||A_{j+2}|)^{\frac12}+\\
 &q^{d+1-k}\sum_{\substack{1\leq j_1<j_2\leq k\\j_1\leq j_2-2}}\left(\prod_{\substack{1\leq i\leq k+1\\i\neq j_1, j_1+1, j_2, j_2+1}}|A_i|\right)(|A_{j_1}||A_{j_1+1}||A_{j_2}||A_{j_2+1}|)^{1/2}+\\
 &\sum_{m=3}^k\sum_{\substack{\vec{v}\in\{0,1\}^k\\wt(\vec{v})=m}}q^{d+m-1-k}\left(\prod_{\substack{1\leq i\leq k+1\\i\neq j_1, j_1+1,j_m, j_m+1 }}|A_i|\right)|A_{j_1+1}|^{\frac12}|A_{j_m}|^{\frac12}|A_{j_1}|^{\frac12}|A_{j_m+1}|^{\frac12}.
  \end{split}
\end{equation}
If there is a constant $C(k)$ only dependent on $k$ such that $|A_i||A_{i+1}|\geq C(k)q^{d+k-1}$ (for instance, one can choose $C(k)=(10k!)^2$), then we have
$$
|R_0|>|R_1|+|R_2|+\cdots+|R_k|.
$$
So
$$
|\Pi_{\vec{\alpha}, G}(A_1\times A_2\times \cdots\times A_{k+1})| =R_0+R_1+\cdots+R_k\geq|R_0|-(|R_1|+|R_2|+\cdots+|R_k|)>0.
$$
Since $|\Pi_{\vec{\alpha}, G}(A_1\times A_2\times \cdots\times A_{k+1})|$ is a nonnegative integer by its definition, it follows that
$$
|\Pi_{\vec{\alpha}, G}(A_1\times A_2\times \cdots\times A_{k+1})|\geq1,
$$
i.e. $\vec{\alpha}\in\Pi_G(A_1\times A_2\times \cdots\times A_{k+1})$. This holds for every $\vec{\alpha}\in(\mathbb{F}_q^*)^k$. Thus $\Pi_G(A_1\times A_2\times \cdots\times A_{k+1})\supseteq(\mathbb{F}_q^*)^k$.

\bibliographystyle{abbrv}
\bibliography{dot_product_REF}
\end{document}